\definecolor{myurlcolor}{rgb}{0,0,0.4}
\definecolor{mycitecolor}{rgb}{0,0.5,0}
\definecolor{myrefcolor}{rgb}{0.5,0,0}
\newcommand{\beq}{\begin{equation}}
\newcommand{\eeq}{\end{equation}}
\newcommand{\N}{\mathbb{N}}
\newcommand{\Nplus}{\mathbb{N}_{> 0}}			
\newcommand{\Z}{\mathbb{Z}}
\newcommand{\Q}{\mathbb{Q}}
\newcommand{\R}{\mathbb{R}}
\newcommand{\TR}{\mathbb{TR}}
\newcommand{\Rplus}{\mathbb{R}_{> 0}}			
\newcommand{\C}{\mathbb{C}}
\newcommand{\op}{\mathrm{op}}
\newcommand{\eps}{\varepsilon}
\newcommand{\supp}[1]{\mathrm{supp}(#1)}		
\DeclareMathOperator{\down}{\downarrow\!\,}		
\DeclareMathOperator{\up}{\uparrow\!\,}		
\newcommand{\Sper}[1]{\mathsf{TSper}(#1)}		
\newcommand{\Proj}[1]{\mathbb{P}(#1)}			
\newcommand{\lev}{\mathrm{lev}}			
\newcommand{\lc}{\mathrm{lc}}			
\DeclareMathOperator*\uplim{\overline{lim}}	
\newcommand{\cat}[1]{\mathsf{#1}}	
\newcommand{\Set}{\cat{Set}}
\newcommand{\Haus}{\cat{Haus}}
\newcommand{\meas}[1]{\mathcal{M}{(#1)}}	
\newcommand{\measc}[1]{\mathcal{M}_c{(#1)}}	
\setlist[enumerate]{label=(\alph*),itemsep=5pt,topsep=6pt}
\setlist[itemize]{label=$\triangleright$,itemsep=5pt,topsep=6pt}
\newtheorem{dummy}{Dummy}[section]
\newtheorem{thm}[dummy]{Theorem}\Crefname{thm}{Theorem}{Theorems}
\newtheorem{lem}[dummy]{Lemma}\Crefname{lem}{Lemma}{Lemmas}
\newtheorem{prop}[dummy]{Proposition}\Crefname{prop}{Proposition}{Propositions}
\newtheorem{cor}[dummy]{Corollary}\Crefname{cor}{Corollary}{Corollaries}
\Crefname{conj}{Conjecture}{Conjectures}
\Crefname{qstn}{Question}{Questions}
\newtheorem{defn}[dummy]{Definition}\Crefname{defn}{Definition}{Definitions}
\Crefname{prob}{Problem}{Problems}
\Crefname{nota}{Notation}{Notations}
\theoremstyle{remark}
\newtheorem{ex}[dummy]{Example}\Crefname{ex}{Example}{Examples}
\newtheorem{rem}[dummy]{Remark}\Crefname{rem}{Remark}{Remarks}
\Crefname{note}{Note}{Notes}
\numberwithin{equation}{section}
\Crefname{enumi}{}{}
\setlist[enumerate,1]{label={(\alph*)}}
\setlist[enumerate,2]{label={(\roman*)},ref={(\roman*)}}
\protected\def\verythinspace{%
	\ifmmode
		\mskip0.5\thinmuskip
	\else
		\ifhmode
			\kern0.08334em
		\fi
	\fi}
\renewcommand{\,}{\verythinspace}
\let\originalleft\left
\let\originalright\right
\renewcommand{\left}{\mathopen{}\mathclose\bgroup\originalleft}
\renewcommand{\right}{\aftergroup\egroup\originalright}
\newcommand{\newterm}[1]{\textbf{#1}}
\begin{document}

\title[Asymptotic and catalytic stochastic orders]{Characterizing the asymptotic and catalytic\\ stochastic orders on topological abelian groups}

\author{\smallskip Tobias Fritz}

\address{Department of Mathematics, University of Innsbruck, Austria}
\email{tobias.fritz@uibk.ac.at}

\keywords{}

\subjclass[2010]{Primary: 60G50, 60E15; Secondary: 60F10, 06F25, 16Y60.}

\thanks{\textit{Acknowledgements.} We thank Richard K\"ung, Rostislav Matveev, Luciano Pomatto, Matteo Smerlak, Arleta Szko{\l}a, Omer Tamuz and P\'eter Vrana for useful discussions and feedback, as well as David Handelman and Terence Tao for discussion on MathOverflow. Part of this work has been conducted while the author was with the Max Planck Institute for Mathematics in the Sciences and later with the Perimeter Institute for Theoretical Physics, both of which we thank for their outstanding research environments.}

\begin{abstract}
	We study the usual stochastic order between probability measures on preordered topological abelian groups, focusing on asymptotic and catalytic versions of the order.
	In the asymptotic version, a measure $\mu$ dominates a measure $\nu$ if the i.i.d.~random walk generated by $\mu$ first-order dominates the one generated by $\nu$ at late times.
	In the catalytic version, $\mu$ dominates $\nu$ if there is a third $\tau$ such that the convolution $\mu \ast \tau$ first-order dominates $\nu \ast \tau$.

	Provided that the preorder on $G$ is induced by a suitably large positive cone and that both measures are compactly supported Radon, our main result gives a sufficient condition for asymptotic and catalytic dominance to hold in terms of a family of inequalities closely related to the cumulant-generating functions.
	While this sufficient condition requires these inequalities to be strict, the non-strict versions of these inequalities are easily seen to be necessary.
	In this sense, our result gives conditions that are necessary and sufficient in generic cases.
	This result has been known for $G = \R$, but is new already for $\R^n$ with $n > 1$.
	It is a direct application of a recently proven theorem of real algebra, namely a \emph{Vergleichsstellensatz} for preordered semirings. 

	We finally use our result to derive a formula for the rate at which the probabilities of a random walk decay \emph{relative} to those of another, now for walks on a preordered topological vector space with compactly supported Radon steps. Taking one of these walks to be deterministic reproduces a version of Cram\'er's large deviation theorem for infinite dimensions.
\end{abstract}

\newcommand{\E}[1]{\mathbb{E}[#1]}
\renewcommand{\P}[1]{\mathbf{P}\left[{#1}\right]}

\newgeometry{top=2cm,bottom=3cm}
\maketitle
\thispagestyle{empty}
\tableofcontents
\restoregeometry

\section{Introduction}

Probability theory offers many classical results on the asymptotic behaviour of random walks, including the strong and weak laws of large numbers, the central limit theorem, and Cram\'er's large deviation theorem. In this paper, we are concerned with the \emph{comparison} of two random walks, and in particular on when random walk dominates another one at late times in \emph{first-order stochastic dominance}, also known as the \emph{usual stochastic order}.
A closely related problem turns out to be that of \emph{catalytic} stochastic dominance, where the comparison is made non-asymptotically but after adding a third independent random variable to both of the original ones.

Here is the result of Aubrun and Nechita on this asymptotic and catalytic dominance, which we will generalize in this paper.

\begin{thm}[{\cite{AN}}]
	\label{Rintro_thm}
	Let random variables $X$ and $Y$ be real-valued and bounded, and let $(X_i)_{i\in\N}$ and $(Y_i)_{i\in\N}$ be i.i.d.~copies. Consider the following conditions:
	\begin{enumerate}[label=(\roman*)]
		\item\label{Rintro_catalytic} There is a random variable $Z$, independent of $X$ and $Y$, such that
			\beq
				\label{catalytic_ineq}
				\P{X + Z \ge c} \le \P{Y + Z \ge c} \quad \forall c \in \R.
			\eeq
		\item\label{Rintro_asymptotic} For $n \ge 1$,
			\beq
				\P{\sum_{i=1}^n X_i \ge c} \le \P{\sum_{i=1}^n Y_i \ge c} \quad \forall c \in \R.
			\eeq
		\item\label{Rintro_ineqs} With $\prec$ standing for $<$ or $\le$, the following hold:
			\begin{align}
				\E{e^{tX}} & \prec \E{e^{tY}},		& \E{e^{-tX}} & \succ \E{e^{-tY}} \qquad \forall t \in \Rplus \nonumber \\[6pt]
				\max X & \prec \max Y,			& \min X & \prec \min Y, \label{Rintro_ineqs_eq}
			\end{align}
			\[
				\E{X} \prec \E{Y}.
			\]
	\end{enumerate}
	Then \ref{Rintro_catalytic} or \ref{Rintro_asymptotic} for some $n \ge 1$ implies that \ref{Rintro_ineqs} holds with non-strict inequalities. Conversely if \ref{Rintro_ineqs} holds with strict inequalities, then \ref{Rintro_catalytic} and \ref{Rintro_asymptotic} for all $n \gg 1$ follow.
\end{thm}

For example, the stochastic order plays an important role in decision theory and economic theory~\cite{microecon}.
In that context, it is certainly of interest to consider its asymptotic and catalytic versions, with the asymptotic version corresponding e.g.~to an idealized simplification of comparing the growth of two portfolios at large times, and the catalytic version corresponding to a comparison of two portfolios when each is combined with a third independent one.

The main result of this paper extends \Cref{Rintro_thm} to the higher-dimensional case, and in fact to random variables with values in arbitrary topological abelian groups $G$, where the stochastic order is defined with respect to any suitably large positive cone $G_+ \subseteq G$. This is new already for the case of $\R^n$ with $n > 1$. We will state and prove this as \Cref{main_thm}. 
Both in \Cref{Rintro_thm} and in our \Cref{main_thm}, the implication from \ref{Rintro_catalytic} or \ref{Rintro_asymptotic} to \ref{Rintro_ineqs} with non-strict inequalities is easy to see and follows simply by the fact that the relevant quantities are monotone with respect to stochastic order and are multiplicative (resp.~additive) with respect to sums of independent variables.
The difficult direction is the converse implication formulated in the final sentence.
While this was proven by Aubrun and Nechita for \Cref{Rintro_thm} by conventional large deviation methods, these methods do not seem to apply in the higher-dimensional case. Instead, our proof of the more general \Cref{main_thm} proceeds by showing that it is an instance of our recent \emph{Vergleichsstellensatz}\footnote{This terminology is by analogy with the \emph{Nullstellensatz} from algebraic geometry, which is conceptually similar result.} for preordered semirings~\cite[Theorem~8.6]{vssII}. In other words, our proof is by reduction to a purely algebraic result.

We imagine that our \Cref{main_thm} may have a similar significance for decision theory and economics as \Cref{Rintro_thm} does.
For example, consider the problem of comparing two portfolios denominated in two different currencies.
Since the exchange rate between the two currencies is itself subject to uncertainty, one may not want to assume any particular exchange rate.
However, it is still possible to compare the two portfolios by applying \Cref{main_thm} with $G = \R^2$ and $G_+ = \R_+^2$.
In this way, one would prefer one portfolio over the other if there is a joint distribution of returns such that the first portfolio's return is almost surely as much as the second's (\Cref{stochastic_order_char}), and our \Cref{main_thm} characterizes when this holds asymptotically or catalytically.

Finally, in \Cref{uniformd_ld} we derive a formula for how the tail probabilities of one random walk decay relative to those of another random walk. This is a consequence of \Cref{main_thm}, and as such applies to random walks with compactly supported Radon steps on topological vector spaces equipped with a suitably large positive cone. In the $\R$-valued case, this takes the following slightly simplified form.

\begin{thm}
	\label{Rintro_uniform_ld}
	For bounded real-valued random variables $X$ and $Y$ with i.i.d.~copies $(X_i)_{i \in \N}$ and $(Y_i)_{i \in \N}$, we have
	\beq
		\label{Rintro_quotient_mgf}
		\sup_{\eps > 0} \, \uplim_{n \to \infty} \, \sup_{c \in \R} \, \frac{1}{n} \log \frac{\P{\frac{1}{n}\sum_{i=1}^n X_i \ge c}}{\P{\frac{1}{n}\sum_{i=1}^n Y_i \ge c - \eps}} = \sup_{t \ge 0} \log \frac{\E{e^{t X}}}{\E{e^{t Y}}},
	\eeq
	where this equation holds in two versions, with $\uplim_{n \to \infty}$ standing for $\liminf_{n \to \infty}$ or $\limsup_{n \to \infty}$.
\end{thm}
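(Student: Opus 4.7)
Set $\mu(t) := \log\E{e^{tX}}$ and $\nu(t) := \log\E{e^{tY}}$, both real-analytic on $\R$ by boundedness of $X, Y$, and denote $\lambda := \sup_{t \ge 0}(\mu(t) - \nu(t))$. I would prove the two inequalities LHS $\le \lambda$ and LHS $\ge \lambda$ in \eqref{Rintro_quotient_mgf} separately, via classical Cram\'er-type large-deviation estimates. The bridge between the probabilistic LHS and the moment-generating RHS is the Fenchel duality identity
\[
\sup_{c \in \R}\bigl(I_Y(c-\eps) - I_X(c)\bigr) \;=\; \sup_{t \ge 0}\bigl(\mu(t) - \nu(t) - t\eps\bigr),
\]
where $I_X(c) := \sup_t(tc - \mu(t))$ and $I_Y$ is defined analogously; note also that $\sup_{\eps > 0}$ of the right-hand side equals $\lambda$, attained as $\eps \to 0^+$.

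For the upper bound, Chernoff gives $\P{\bar X_n \ge c} \le e^{n\mu(t) - ntc}$ for every $t \ge 0$, and the standard Cram\'er lower bound yields $\liminf_n \frac{1}{n}\log\P{\bar Y_n \ge c - \eps} \ge -I_Y(c-\eps)$ whenever $c - \eps$ lies in the interior of $\conv{\R}{\supp{Y}}$. Dividing these bounds and then applying $\inf_{t \ge 0}$, $\sup_c$, and $\sup_{\eps > 0}$ in turn gives LHS $\le \lambda$ by the duality identity above. For the lower bound, fix $\delta > 0$ and pick $t^* \ge 0$ with $\mu(t^*) - \nu(t^*) \ge \lambda - \delta$; when $t^*$ is an interior optimizer of $\mu - \nu$, the first-order condition provides $c^* := \mu'(t^*) = \nu'(t^*)$, and Cram\'er's lower bound on $\P{\bar X_n \ge c^*}$ combined with Cram\'er's upper bound on $\P{\bar Y_n \ge c^* - \eps}$ shows that the ratio's log-liminf is at least $I_Y(c^*-\eps) - I_X(c^*)$, which tends to $\mu(t^*) - \nu(t^*) \ge \lambda - \delta$ as $\eps \to 0^+$; letting $\delta \to 0^+$ then gives LHS $\ge \lambda$.

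The main obstacle is the boundary behaviour: when $t^*$ drifts to $0$ or $\infty$, when $c^*$ coincides with $\E Y$ or $\max Y$, or when $\lambda = +\infty$ (as happens when $\max X \ge \max Y$, where the equality LHS $= +\infty$ follows from a direct tail-probability argument rather than from Cram\'er), the sharp one-sided Cram\'er bounds need extra care and the interior optimizer $t^*$ may not exist. The slack $\eps > 0$ in the statement of the theorem is designed precisely to sidestep these edges by keeping $c - \eps$ in the interior of $\supp{Y}$ throughout the large-$n$ limit, with the outer $\sup_\eps$ (equivalently $\eps \to 0^+$) recovering the sharp rate in the limit. A complementary approach, presumably closer to what the author does in \Cref{uniformd_ld} for the general topological-group version, would apply the Vergleichsstellensatz of the paper's main theorem to suitably modified random variables so as to obtain the rate information directly, without explicitly invoking Cram\'er's theorem.
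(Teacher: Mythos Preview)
Your route is genuinely different from the paper's. The paper does not invoke Cram\'er's theorem at all; instead it introduces auxiliary variables $X'$ (a $p$--$(1-p)$ mixture of $X$ and $-ku$) and $Y' \coloneqq Y + \eps u$, and for the hard direction $\le$ applies the Vergleichsstellensatz (\Cref{main_thm}) to conclude stochastic dominance $\sum X'_i \le \sum Y'_i$ for all $n \gg 1$. Stochastic dominance is by definition a bound that holds simultaneously for \emph{all} closed upsets $C$, which immediately gives the $\sup_c$ inside the limit. Your last paragraph correctly guesses this shape.

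There is a real gap in your sketch, and it lies exactly where the paper's approach earns its keep. For the upper bound $\mathrm{LHS}\le\lambda$ you combine the Chernoff bound on the numerator with the Cram\'er lower bound on the denominator. But the Cram\'er lower bound you quote,
\[
\liminf_{n}\tfrac{1}{n}\log\P{\bar Y_n \ge c-\eps} \ge -I_Y(c-\eps),
\]
is a pointwise-in-$c$ statement, whereas the left-hand side of \eqref{Rintro_quotient_mgf} has $\sup_c$ \emph{inside} the limit. From pointwise bounds one gets only $\sup_c \limsup_n(\cdots)\le\sup_c\bigl(I_Y(c-\eps)-I_X(c)\bigr)$, not the required $\limsup_n \sup_c(\cdots)\le\sup_c\bigl(I_Y(c-\eps)-I_X(c)\bigr)$. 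Closing this gap demands a Cram\'er lower bound for $\bar Y_n$ that is uniform in the threshold, which is precisely the nontrivial content of the theorem; proving it directly for bounded real variables is possible but requires substantial extra work (uniform control of the tilted variance as $c-\eps$ ranges over $[\E Y,\max Y]$, including the endpoints), none of which you supply. The paper sidesteps this entirely because stochastic dominance delivers the uniform bound for free.

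Two smaller points. First, your characterisation ``$\lambda=+\infty$ happens when $\max X\ge\max Y$'' is not correct: equality $\max X=\max Y$ is compatible with $\lambda$ finite or infinite, depending on the tail masses. Second, the Fenchel identity you state,
\[
\sup_{c}\bigl(I_Y(c-\eps)-I_X(c)\bigr)=\sup_{t\ge 0}\bigl(\mu(t)-\nu(t)-t\eps\bigr),
\]
needs justification: a difference of Legendre transforms is not in general the Legendre transform of the difference, and here one must use the specific one-sided structure (sup over $t\ge 0$) and monotonicity carefully. Your lower-bound half is essentially fine at interior optimisers $t^*$, but as you note yourself, the cases $t^*\to 0$ and $t^*\to\infty$ remain unhandled.
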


Taking $X$ to be deterministic recovers a version of Cram\'er's large deviation theorem (for bounded $Y$). We show this in \Cref{cramer} for random variables taking values in topological vector spaces. In the proof of that result, it is instructive to see how the rate function, being the Legendre--Fenchel transform of the cumulant-generating function of $Y$, arises from the right-hand side of \eqref{Rintro_quotient_mgf} upon taking $X$ to be deterministic.

\subsection*{Summary}

We now briefly summarize the content of the individual sections of this paper.

\begin{itemize}
	\item We recall some measure-theoretic preliminaries in \Cref{prelims}, relevant in particular to the infinite-dimensional case.
	\item We discuss preordered topological abelian groups in \Cref{potag}, including the introduction of an order unit condition that appears in our main results (\Cref{nicely_ordered}).
	\item We consider preordered semialgebras in \Cref{meas_semialg} and restate a simplified version of our \emph{Vergleichsstellensatz}~\cite[Theorem~8.6]{vssII} as \Cref{vss}. This is the algebraic result from which our present results will follow as an instance, and which has also found applications in representation theory~\cite{rep_app} and asymptotic statistics~\cite{major}.
		To better organize the structure of the inequalities which appear in the \emph{Vergleichsstellensatz}, we recall the \emph{test spectrum} of a preordered semialgebra in \Cref{tsper_defn}.

		Moreover, we also explain how compactly supported Radon measures on a preordered topological abelian group (with an order unit) form a preordered semialgebra satisfying the relevant assumptions, with multiplication given by the convolution of measures.
	\item \Cref{main_sec} then states and proves our main result as \Cref{main_thm}. We also switch from measure-theoretic terminology and notation to the more intuitive random variables language.
	\item \Cref{ncgf} investigates the test spectrum for the preordered semialgebra of measures under convolution further.
		We explain how this test spectrum recovers the normalized cumulant-generating function, and how this function rewrites all of the inequalities in \eqref{Rintro_ineqs_eq} in a unified form.
	\item Finally, \Cref{sec_uld} uses our results of the previous two sections to derive the general version of \Cref{Rintro_uniform_ld} as \Cref{uniformd_ld}.
		To illustrate that this result is nontrivial even when one of the variables is deterministic, we derive an infinite-dimensional version of Cram\'er's large deviation theorem as \Cref{cramer}.
\end{itemize}

\section{Radon measures on Hausdorff spaces}
\label{prelims}

We start with some measure-theoretic preliminaries, which are relevant mainly for getting maximum mileage out of our methods by allowing us to treat the general case of topological abelian groups rather than e.g.~merely $\R^n$. Readers who are only interested in the finite-dimensional situation of $\R^n$, where our results are still new (for $n > 1$) and nontrivial, can safely skip this preliminary section.

We write $\Haus$ for the category of Hausdorff spaces and continuous maps. For $A \in \Haus$, we denote by $\meas{A}$ the set of finite (unsigned) \newterm{Radon measures} on $A$, i.e.~the set of finite Borel measures that are inner regular.
The regularity of $\mu \in \meas{A}$ guarantees that there is a largest closed set of full measure, called the \newterm{support} $\supp{\mu}$.
If $f : A \to B$ is continuous, then pushforward of measures defines a map $\meas{f} : \meas{A} \to \meas{B}$~\cite[Section~I.5]{schwartz}. We thereby obtain a functor $\mathcal{M} : \Haus \to \Set$.

For $A, B \in \Haus$, their product space $A \times B$ is again in $\Haus$. Applying the functoriality to the product projections $A \times B \to A$ and $A \times B \to B$ produces the two components of the \newterm{marginalization map}
\beq
	\label{marginals}
	\Delta_{A,B} \: : \: \meas{A \times B} \longrightarrow \meas{A} \times \meas{B}.
\eeq
In the other direction, the formation of product measures~\cite[p.~63]{schwartz} induces a \newterm{product map}
\beq
	\label{products}
	\nabla_{A,B} \: : \: \meas{A} \times \meas{B} \longrightarrow \meas{A \times B},
\eeq
where the support of the product measure on $A \times B$ is exactly the product of the respective supports in $A$ and $B$. Both $\Delta_{A,B}$ and $\nabla_{A,B}$ are natural in $A$ and $B$ in the sense of category theory. We refer to~\cite{bimonoidal} for a general theory of these maps and the equations they satisfy.

The \newterm{compactly supported} finite Radon measures form a subset $\measc{A} \subseteq \meas{A}$.
It is easy to see that the marginalization and product maps above restrict to corresponding maps on $\mathcal{M}_c$, namely for any $A, B \in \Haus$, we have
\begin{alignat}{2}
	\label{marginalsc}
	& \textrm{Marginalization map:} \quad\: & & \Delta_{A,B} \: : \: \measc{A \times B} \longrightarrow \measc{A} \times \measc{B}, \\
	\label{productsc}
	& \textrm{Product map:} \quad\: & & \nabla_{A,B} \: : \: \measc{A} \times \measc{B} \longrightarrow \measc{A \times B}.
\end{alignat}

\section{Preordered topological abelian groups}
\label{potag}

In this section, we state and discuss the relevant definitions concerning preordered topological abelian groups. For us, a \newterm{topological group} is a group $G$ with a Hausdorff topology such that both the multiplication map $G \times G \to G$ and the inversion map $G \to G$ are continuous.
Throughout, we work with topological abelian groups using additive notation.

\begin{defn}
A topological abelian group $G$ is \newterm{preordered} if it comes equipped with a \newterm{positive cone}, which is a distinguished closed subset $G_+ \subseteq G$ with
\[
	G_+ + G_+ \subseteq G_+, \qquad 0 \in G_+.
\]
\end{defn}

We refer to~\cite{goodearl} for further background on preordered abelian groups in the purely algebraic context. 

The paradigmatic examples that we have in mind at this point are $G = \R^d$ with $G_+$ any closed convex cone, or more generally any topological vector space equipped with a closed convex cone~\cite{AT}. In the latter case, we will say that $G$ is a \newterm{preordered topological vector space}.
However, if $G$ is a topological vector space, then the positive cone $G_+$ is not automatically closed under positive scalar multiplication: taking $G = \R$ and
\[
	G_+ \coloneqq \{0\} \cup [1,\infty)
\]
still produces a preordered topological abelian group in our sense, but not a preordered topological vector space.
The following example is even more peculiar.

\begin{rem}
	For a preordered topological abelian group $G$, the set $G_+ - G_+$ is automatically a subgroup of $G$, but it need not be open or closed.

	For example, consider the topological abelian group $G = \R$ with positive cone
	$G_+$ given by zero together with all rationals $\frac{p}{q}$ satisfying $q > 0$ and $\frac{p}{q} \ge \log q$.
	A short computation shows that this set is indeed closed under addition.
	It is topologically closed since it contains only finitely many points in every bounded interval.
	However, we clearly have $G_+ - G_+ = \Q$.
\end{rem}

For $a, b \in G$, we write $a \le b$ as usual if $b - a \in G_+$, defining a preorder relation on $G$ which is translation-invariant. In terms of this, we also have the \newterm{order interval}
\[
	[a,b] \coloneqq \{ x\in G \mid a \le x \le b \} = (a + G_+) \cap (b - G_+),
\]
which is clearly closed.
For a subset $S \subseteq G$, we also write
\[
	\down{S} \coloneqq \{x \in G \mid \exists s \in S, \: x \le s \} = S - G_+
\]
for the \newterm{downset} generated by $S$, and similarly $\up{S}$ for the \newterm{upset} $\up{S} \coloneqq S + G_+$. A set $S$ is \newterm{downward closed} if it is equal to its own downset; and similarly $S$ is \newterm{upwards closed} if it is its own upset.

The following definition is standard, at least in the purely algebraic setting~\cite[p.~4]{goodearl}.

\begin{defn}
	\label{nicely_ordered}
	Let $G$ be a preordered topological abelian group. Then an \newterm{order unit} is an element $u \in G_+$ such that:
	\begin{enumerate}
		\item\label{orderunit} For every $x \in G$ there is $k \in \N$ with $x \le ku$.
		\item\label{intervalnbhd} The order interval $[-u,+u]$ is a neighbourhood of $0 \in G$.
	\end{enumerate}
\end{defn}

In particular, if $G_+$ has an order unit, then $G = G_+ - G_+$.

\begin{rem}
	We comment on the relation between these two conditions. For preordered topological vector spaces, it is well-known that \ref{intervalnbhd} implies \ref{orderunit}~\cite[Lemma~2.5]{AT}. But this is not true for preordered topological abelian groups in general. An almost trivial example is $G = \Z$ and $G_+ = \{0\}$, for which $u = 0$ satisfies \ref{intervalnbhd} but not \ref{orderunit}.
	
	The other direction already fails for preordered topological vector spaces. For example, consider $C([0,1])$ equipped with the weak-$*$ topology and preordered with respect to the usual closed convex cone containing the nonnegative functions. Then the constant function $u \coloneqq 1$ satisfies~\ref{orderunit}, but the order interval $[-1,+1]$ in $C([0,1])$ is not a neighbourhood of zero.
\end{rem}

\begin{ex}
	If $G$ is finite, then every submonoid $G_+ \subseteq G$ is a positive cone. Since every element is torsion, the positive cones are then exactly the subgroups. It follows that $G_+$ has an order unit if and only if $G = G_+$, in which case every element is an order unit.
\end{ex}

\begin{ex}
	For $G = \Z^d$, a positive cone $G_+$ has an order unit if and only if $G = G_+ - G_+$. Indeed if this condition holds, then we can write the standard basis vectors as $e_i = x_i - y_i$ for $x_i,y_i \in G_+$ for all $i=1,\ldots,d$. Hence $u \coloneqq x_1 + \ldots + x_d$ is an order unit.
\end{ex}

\begin{ex}
	For $G = \R^d$ as a topological vector space, consider any closed convex cone $G_+ \subseteq \R^d$. Then an element of $G_+$ is an order unit if and only if it is a topologically interior point.
\end{ex}

\begin{lem}
	\label{compact_bounded}
	Let $u \in G_+$ be an order unit. Then for every compact $C \subseteq G$ there is $k \in \N$ with
	\[
		C \subseteq \down \{ku\} \cap \up\{-ku\}.
	\]
\end{lem}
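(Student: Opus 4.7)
The plan is to combine the two clauses of \Cref{nicely_ordered} with a standard compactness argument.

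First I would show that every point $x \in G$ lies in a symmetric order interval $[-k_x u, +k_x u]$ for some $k_x \in \N$. Condition~\ref{orderunit} applied once to $x$ gives some $k'$ with $x \le k' u$, and applied to $-x$ gives some $k''$ with $-x \le k'' u$, i.e.\ $-k'' u \le x$. Setting $k_x \coloneqq \max(k',k'')$ and using that $k \mapsto [-ku,+ku]$ is increasing in $k$ (because $(k-k')u \in G_+$ whenever $k \ge k'$, since $G_+$ is closed under addition and contains $0$), we get $x \in [-k_x u, +k_x u]$.

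Next I would upgrade this to a neighbourhood statement. By~\ref{intervalnbhd}, the interval $[-u,+u]$ is a neighbourhood of $0$, so by translation invariance, $x + [-u,+u]$ is a neighbourhood of $x$. A direct computation shows
\[
    x + [-u,+u] \subseteq [-(k_x+1)\, u,\, +(k_x+1)\, u],
\]
since any $y$ in the left-hand side satisfies $-k_xu - u \le y \le k_xu + u$.

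Now I would invoke compactness: the family of open neighbourhoods $\{\mathrm{int}(x + [-u,+u])\}_{x \in C}$ covers $C$, so finitely many of them, say corresponding to $x_1,\ldots,x_N \in C$, suffice. Taking $k \coloneqq \max_i (k_{x_i} + 1)$ and again using the monotonicity of $[-ku,+ku]$ in $k$, each of the chosen neighbourhoods is contained in $[-ku,+ku]$, and hence so is $C$. Finally, recall from the discussion of order intervals that
\[
    [-ku,+ku] = (-ku + G_+) \cap (ku - G_+) = \up\{-ku\} \cap \down\{ku\},
\]
which yields the desired inclusion.

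There is no real obstacle here; the only mild subtlety is remembering to apply~\ref{orderunit} to both $x$ and $-x$ to obtain a \emph{two-sided} bound, and to enlarge $k_x$ by $1$ when passing from the point bound to the neighbourhood bound so that the translated interval fits inside a symmetric order interval. The monotonicity of symmetric order intervals in $k$ is what allows the finite max to work cleanly.
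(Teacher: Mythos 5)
Your proof is correct and follows essentially the same route as the paper's: apply condition \ref{orderunit} pointwise, pass to the order-interval neighbourhoods $x + [-u,+u]$ guaranteed by \ref{intervalnbhd}, extract a finite subcover by compactness, and take the maximum of the resulting bounds. The only cosmetic difference is that you track both bounds simultaneously via symmetric intervals, whereas the paper proves $C \subseteq \down\{ku\}$ and obtains the other inclusion by symmetry.
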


\begin{proof}
	It is enough to prove $C \subseteq \down\{ku\}$ for some $k$, since then $C \subseteq \up\{-ku\}$ for some $k$ follows by symmetry. For every $x \in C$ we have $k_x \in \N$ with $x \le k_x u$. But then also $x' \le (k_x + 1) u$ for every $x' \in [x - u, x + u]$. Since this order interval is a neighbourhood of $x$, the compactness implies that there are finitely many $x_1,\ldots,x_n \in C$ such that $C \subseteq \bigcup_i [x_i - u, x_i + u]$. With $k \coloneqq \max_{i=1,\ldots,n} k_{x_i}$, the claim $C \subseteq \down \{k u\}$ now follows.
\end{proof}

\section{The preordered semialgebra of measures}
\label{meas_semialg}

\subsection*{Convolution}

If $G$ is a topological abelian group, then the multiplication $G \times G \to G$ induces the \newterm{convolution of measures} map defined as the composition
\beq
	\label{convolute}
	\meas{G} \times \meas{G} \longrightarrow \meas{G \times G} \longrightarrow \meas{G},
\eeq
where the first map is an instance of~\eqref{products} and the second one is by functoriality of $\mathcal{M}$ applied to the multiplication map.
More explicitly, the convolution can be characterized in terms of how to integrate against it: for a bounded measurable function $f : G \to \R$ and two measures $\mu,\nu \in \meas{G}$, we have~\cite[\S{}444]{fremlin},
\beq
	\label{integral_convolute}
	\int f \, d(\mu \ast \nu) = \iint f(x + y) \, d\mu(x) \, d\nu(y).
\eeq
This convolution operation turns $\meas{G}$ into a commutative monoid with neutral element $\delta_0$. A convenient way of proving the relevant associativity and commutativity properties is to use the corresponding associativity and commutativity properties of the formation of product measures~\eqref{products}, which amount to the fact that $\mathcal{M}$ is a lax symmetric monoidal functor. The traditional computational proof using the explicit formula for convolution~\cite[Section~2.5]{folland} is the same in spirit.

By \eqref{productsc}, it follows that the set of compactly supported Radon measures $\measc{G}$ is closed under convolution, and therefore becomes a submonoid of $\meas{G}$.
For $x,y \in G$, we have $\delta_x \ast \delta_y = \delta_{x + y}$, and this makes the inclusion
\beq
	\label{unit_monad}
	G \longrightarrow \measc{G}, \qquad x \longmapsto \delta_x
\eeq
into a homomorphism of commutative monoids.

Recall also that if $X$ and $Y$ are independent $G$-valued random variables with distributions $\mu$ and $\nu$, then $\mu \ast \nu$ is the distribution of the $G$-valued variable $X + Y$.

\subsection*{The stochastic preorder}

Suppose now that $G$ is a preordered topological abelian group, where the preorder is defined through a positive cone $G_+$. We now extend the resulting preorder on $G$ to a preorder on $\meas{G}$ known as the stochastic preorder. We refer to Strassen~\cite[Theorem~11]{strassen}, Edwards~\cite[Theorem~7.1]{edwards} and Kellerer~\cite[Proposition~3.12]{kellerer} for more general definitions and proofs of the following equivalence, which crucially rely on the assumption that the measures involved are Radon, but neither use the group structure nor Hausdorffness of $G$.

\begin{prop}
	\label{stochastic_order_char}
	For $\mu,\nu \in \meas{G}$ with $\mu(G) = \nu(G)$, the following are equivalent:
	\begin{enumerate}
		\item $\mu(C) \le \nu(C)$ for every closed upset $C \subseteq G$.
		\item $\mu(U) \le \mu(U)$ for every open upset $U \subseteq G$.
		\item For every monotone and lower semi-continuous function $f : G \to \R$, we have
			\begin{equation}
				\label{stochastic_order_integral}
				\int f \, d\mu \le \int f \, d\nu.
			\end{equation}
		\item\label{joint} There is $\lambda \in \meas{G \times G}$ with marginals $\mu$ and $\nu$, and such that $\lambda$ is supported on the preorder relation $\{(x,y) \mid x \le y\} \subseteq G \times G$.
	\end{enumerate}
\end{prop}

Throughout the rest of the paper, we call a map $f$ ``monotone'' if it is order-preserving, that is $x \le y$ implies $f(x) \le f(y)$.

\begin{defn}
	The \newterm{stochastic preorder} is the relation on $\meas{G}$ defined by these equivalent conditions.
\end{defn}

We also write $\mu \le \nu$ to denote this relation for $\mu,\nu \in \meas{G}$. By definition, $\mu \le \nu$ can hold only if the normalizations are the same, $\mu(G) = \nu(G)$. Intuitively, $\mu \le \nu$ means that $\nu$ can be obtained from $\mu$ by merely moving mass upwards in the preorder. Note that $x \le y$ in $G$ is equivalent to $\delta_x \le \delta_y$ in $\meas{G}$.

Either of the first three equivalent conditions obviously shows that $\le$ is a preorder relation, i.e.~is reflexive and transitive. If the preorder on $G$ is antisymmetric, or equivalently if $G_+ \cap (-G_+) = \{0\}$, then it is known that the stochastic preorder is antisymmetric too~\cite{antisymmetry}. The most well-known instance of the stochastic preorder is for $G = \R$ and $G_+ = \R_+$, in which case it is also called the \newterm{usual stochastic order} or \newterm{first-order stochastic dominance}. In this case, we have $\mu \le \nu$ if and only if
\beq
	\label{R_stoch_order}
	\mu( [c,\infty) ) \le \nu( [c,\infty) ) \qquad \forall c\in\R,
\eeq
since in this case the closed and upward closed sets are exactly the $[c,\infty)$. This system of inequalities can be understand intuitively upon thinking of $\mu$ and $\nu$ as return distributions of a financial asset: then this condition states that the return distribution described by $\nu$ is unambiguously (non-strictly) preferable over the one given by $\mu$~\cite{HR}.

For later use, we record a simple observation relating the stochastic preorder with supports.

\begin{lem}
	\label{supp_crit}
	Suppose that $\mu,\nu \in \meas{G}$ with $\mu(G) = \nu(G)$ are such that $x \le y$ for all $x \in \supp{\mu}$ and $y \in \supp{\nu}$. Then $\mu \le \nu$.
\end{lem}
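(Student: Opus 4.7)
The plan is to verify condition \ref{joint} of \Cref{stochastic_order_char} directly: I will exhibit a coupling $\lambda \in \meas{G \times G}$ of $\mu$ and $\nu$ whose support lies in the preorder relation
\[
	R \coloneqq \{(x,y) \in G \times G \mid x \le y\}.
\]
Note that $R$ is closed, since $G_+$ is closed by assumption and the subtraction map $G \times G \to G$, $(x,y) \mapsto y - x$, is continuous.

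After dispatching the trivial case $\mu(G) = \nu(G) = 0$ (where both measures vanish and $\mu \le \nu$ holds automatically), I would set $c \coloneqq \mu(G) = \nu(G) > 0$ and take
\[
	\lambda \coloneqq c^{-1} \, \nabla_{G,G}(\mu, \nu).
\]
This is a Radon measure on $G \times G$ by~\eqref{products}, and a one-line check with the product measure formula (equivalently with the definition of marginalization) gives $\Delta_{G,G}(\lambda) = (\mu,\nu)$, since $\lambda(A \times G) = c^{-1}\mu(A)\nu(G) = \mu(A)$ for every Borel set $A \subseteq G$, and symmetrically for the second marginal.

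For the support condition I would invoke the fact recalled in \Cref{prelims} that the support of a product of Radon measures is exactly the product of supports, whence
\[
	\supp{\lambda} = \supp{\mu} \times \supp{\nu}.
\]
The hypothesis of the lemma says precisely that this product is contained in $R$. Condition \ref{joint} of \Cref{stochastic_order_char} is then verified, giving $\mu \le \nu$. I do not anticipate a substantive obstacle here: every ingredient has already been assembled in \Cref{prelims} and \Cref{stochastic_order_char}, and the only piece of care required is to rescale the product measure by $c^{-1}$ so that the marginals come out correctly (which is why the $c=0$ case is treated separately).
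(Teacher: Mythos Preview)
Your proof is correct and follows the same approach as the paper: the paper's proof normalizes to $\mu(G)=\nu(G)=1$ and then takes $\lambda$ to be the product measure $\mu \otimes \nu$, invoking condition~\ref{joint} of \Cref{stochastic_order_char}. Your version with the explicit rescaling by $c^{-1}$ (and the separate treatment of $c=0$) is just the unnormalized rephrasing of the same argument.
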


\begin{proof}
	We assume $\mu(G) = \nu(G) = 1$ without loss of generality. Then this follows e.g.~from condition~\ref{joint} of \Cref{stochastic_order_char} upon taking $\lambda$ to be the product measure $\mu \otimes \nu$.
\end{proof}

\subsection*{Preordered semialgebra structure}

$\meas{G}$ carries both an additive commutative monoid structure given by addition of measures, as well as the commutative monoid structure given by convolution~\eqref{convolute}, which distributes over the addition. This makes $\meas{G}$ into a \newterm{commutative semiring}.
While we refer to the literature for the full definitions~\cite{golan}, it may help to note that a semiring is like a ring, except in that additive inverses generally do not exist. 
As usual, we denote its operations by $+$ and $\cdot$ and the corresponding neutral elements by $0$ and $1$. 
Since all of the semirings considered in this paper are commutative, we no longer mention the commutativity assumption explicitly.

Thus $\measc{G}$ is a semiring and $\meas{G}$ is a subsemiring.
Both semirings also carry an additional scalar multiplication by nonnegative reals, as per the following definition.

\begin{defn}
	A \newterm{semialgebra} $S$ is a semiring together with a map
	\[
		\R_+ \times S \longrightarrow S
	\]
	which is additive\footnote{By additivity, we mean both that binary addition and nullary addition, i.e.~the neutral element $0$, are preserved.} in each argument and satisfies $1x = x$ as well as $(rx)(sy) = (rs)(xy)$ for all $r,s \in \R_+$ and $x,y \in S$.
\end{defn}

Since we will not consider scalar multiplication by any other semiring than $\R_+$, we leave out mention of $\R_+$ in the term ``semialgebra''.
Here are the two most important examples that are relevant for the theory of preordered semirings itself~\cite{vssII}.

\begin{ex}
	\label{rtr}
	\begin{enumerate}
		\item $\R_+$ itself, with its usual algebraic structure, is an $\R_+$-semialgebra.
		\item The \newterm{tropical reals} $\TR_+$ are the $\R_+$-semialgebra given by the semiring\footnote{The tropical reals are usually defined as a different but isomorphic semiring, namely $(\R\cup\{-\infty\}, \max, +)$, where the logarithm and exponential implement an isomorphism between this definition and ours. The multiplicative version that we use turns out to be more convenient for our purposes, in particular \Cref{tsper_defn}.}
			\[
				(\R_+, \max, \cdot),
			\]
			meaning that addition is formation of the maximum with neutral element $0$, while multiplication is as usual. As for scalar multiplication, we put $rx \coloneqq x$ for all $r \in \Rplus$ as well as $0x \coloneqq 0$. 
	\end{enumerate}
\end{ex}

Our main object of study will be $\measc{G}$, considered as a semialgebra and together with the stochastic preorder. What compatibility is there between the algebraic structure and the preorder?
It is straightforward to see that the following compatibility holds on $\measc{G}$.

\begin{defn}
	A \newterm{preordered semiring} $S$ is a semiring together with a preorder relation $\le$ such that for all $a,x,y \in S$,
	\[
		x \le y \qquad \Longrightarrow \qquad a + x \le a + y, \qquad ax \le ay.	
	\]
	A \newterm{preordered semialgebra} is a semialgebra which is preordered as a semiring.
\end{defn}

Note that the scalar multiplication $x \mapsto rx$ for $r \in \R_+$ is automatically monotone, since $x \le y$ implies that $rx = (r1)x \le (r1)y = ry$. We present some examples, starting with two generically important ones and then our main object of study.

\begin{ex}
	If $S$ is a preordered semialgebra, then we write $S^\op$ for the same semialgebra but with the \newterm{opposite preorder}, meaning that $x \le y$ holds in $S^\op$ if and only if $x \ge y$ holds in $S$.
	Clearly $S^\op$ is again a preordered semialgebra.
\end{ex}

\begin{ex}
	Consider the semialgebras $\R_+$ and $\TR_+$ from \Cref{rtr}. Both of these are preordered semialgebras with respect to the usual order on the real numbers. $\R_+^\op$ and $\TR_+^\op$ are the same preordered semialgebras carrying the opposite of the usual order.
\end{ex}

\begin{ex}
	Let $G$ be a preordered topological abelian group. Then $\measc{G}$ is a preordered semialgebra with respect to addition, scalar multiplication and convolution of measures as algebraic operations, and with respect to the stochastic preorder as preorder.
	Indeed the monotonicity of addition is an obvious consequence of either condition in \Cref{stochastic_order_char}, while the monotonicity of multiplication is perhaps most easily seen from \eqref{stochastic_order_integral}: if $\mu \le \nu$ and $\tau \in \measc{G}$, then for every monotone and lower semi-continuous $f$, we have
	\[
		\int f \, d(\tau \ast \mu) = \iint f(x + y) \, d\tau(x) \, d\mu(y) \le \iint f(x + y) \, d\tau(x) \, d\nu(y) = \int f \, d(\tau \ast \nu), 
	\]
	where the inequality holds by the assumed $\mu \le \nu$ and because $y \mapsto \int f(x + y) \, d\tau(x)$ is still a monotone and lower semi-continuous function on $G$.
\end{ex}

The following growth condition plays a key role in our theory of preordered semirings.

\begin{defn}[{\cite[Definition~3.28]{vssI}}]
\label{univ_defn}
Let $S$ be a preordered semiring. An element $v\in S$ with $v\geq 1$ is \newterm{power universal} if for every $x \le y$ in $S$, there is $k \in \N$ such that
\beq
\label{power_univ}
	y \le v^k x.
\eeq
\end{defn}

The following shows that this indeed applies in the case of interest to us.
We continue assuming that $G$ is a preordered topological abelian group and prove two auxiliary statements about the preordered semialgebra $\measc{G}$.

\begin{lem}
	\label{MG_polygrowth}
	Let $u \in G_+$ be an order unit (\Cref{nicely_ordered}).
	Then $v \coloneqq \delta_u$ is power universal in $\measc{G}$.
\end{lem}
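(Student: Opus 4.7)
The plan is to reduce the inequality to the support criterion in \Cref{supp_crit}, using \Cref{compact_bounded} to supply the exponent $k$ out of the compactness of the supports.

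First I would check that $v = \delta_u \ge 1 = \delta_0$. Since $u \in G_+$, we have $0 \le u$ in $G$, and under the embedding~\eqref{unit_monad} this is precisely $\delta_0 \le \delta_u$ in the stochastic preorder, as already noted after the definition of the latter.

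Now let $\mu \le \nu$ in $\measc{G}$. By definition of the stochastic preorder, the total masses coincide, $\mu(G) = \nu(G)$. Since $\mu$ and $\nu$ are compactly supported, the set
\[
	K \coloneqq \supp{\nu} - \supp{\mu} \;=\; \{\, y - x \mid y \in \supp{\nu}, \, x \in \supp{\mu}\,\}
\]
is compact in $G$, being the image of the compact product $\supp{\nu} \times \supp{\mu}$ under the continuous subtraction map. Applying \Cref{compact_bounded} to $K$ produces $k \in \N$ with $K \subseteq \down\{ku\}$. Equivalently, for every $y \in \supp{\nu}$ and every $x \in \supp{\mu}$ we have $y - x \le ku$, that is, $y \le x + ku$.

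Finally I would unfold what $v^k \mu$ looks like: $v^k = \delta_u^{\ast k} = \delta_{ku}$ by the homomorphism property of~\eqref{unit_monad}, so $v^k \mu = \delta_{ku} \ast \mu$ is the translate of $\mu$ by $ku$, with support $\supp{\mu} + ku$ and total mass $\mu(G) = \nu(G)$. The inequality $y \le x + ku$ obtained above says exactly that every point of $\supp{\nu}$ lies below every point of $\supp{v^k \mu}$ in the preorder on $G$. \Cref{supp_crit}, applied to the pair $(\nu, v^k\mu)$, then yields $\nu \le v^k \mu$, as required by \Cref{univ_defn}.

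The only nontrivial step is producing the single exponent $k$ that works simultaneously for all pairs of points in the two supports; this is exactly the content of \Cref{compact_bounded}, so the result follows with essentially no further obstacle.
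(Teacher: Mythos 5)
Your proposal is correct and follows essentially the same route as the paper: both arguments reduce the claim to \Cref{compact_bounded} plus \Cref{supp_crit}. The only (harmless) difference is that you apply \Cref{compact_bounded} once to the compact difference set $\supp{\nu}-\supp{\mu}$, whereas the paper applies it separately to bound $\supp{\nu}$ above and $\supp{\mu}$ below by $\pm ku$ and then chains the two inequalities, arriving at the exponent $2k$ instead of your $k$.
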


\begin{proof}
	If $\mu,\nu \in \measc{G}$ satisfy $\mu \le \nu$, then in particular $\mu(G) = \nu(G)$. We can thus assume that both $\mu$ and $\nu$ are probability measures without loss of generality. With $\mu_-$ denoting the pushforward of $\mu$ along the inversion map $G \to G$, we first show that there is $k \in \N$ with
	\[
		\nu \le \delta_{ku}, \qquad \mu_- \le \delta_{ku}.
	\]
	Since $\supp{\nu}$ is compact by assumption, the first inequality follows from \Cref{compact_bounded} and \Cref{supp_crit}.
	The argument for $\mu_- \le \delta_{ku}$ is the same.

	We then get
	\[
		\nu \le \delta_{ku} = \delta_{2ku} \ast \delta_{-ku} \le \delta_{2ku} \ast \mu = (\delta_u)^{2k} \ast \mu,
	\]
	which is enough.
\end{proof}

The following finite approximation result will be a crucial stepping stone in the proof of our main result presented in the next section.

\begin{lem}
	\label{finite_support_dense}
	For $u \in G$ an order unit and for every $\mu \in \measc{G}$, the order interval
	\[
		\left[ \mu \ast \delta_{-2u}, \mu \ast \delta_{+2u} \right]
	\]
	contains a finitely supported measure.
\end{lem}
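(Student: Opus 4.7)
The plan is to approximate $\mu$ by placing Dirac masses at finitely many reference points $x_1,\ldots,x_n \in \supp{\mu}$, after cutting the support into pieces that are small enough in the order to be absorbed by translation by $\pm u$. Since $u$ is an order unit, condition \ref{intervalnbhd} of \Cref{nicely_ordered} gives that $[-u,+u]$ is a neighbourhood of $0$, and I choose an open neighbourhood $V$ of $0$ contained in $[-u,+u]$. The compactness of $\supp{\mu}$ then yields a finite subcover $\{x_i + V\}_{i=1}^n$ of $\supp{\mu}$ with $x_i \in \supp{\mu}$, from which I form a Borel partition $A_1,\ldots,A_n$ of $\supp{\mu}$ by enumerating the pieces and subtracting earlier ones, so that $A_i \subseteq x_i + V \subseteq x_i + [-u,+u]$. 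The candidate measure is
\[
    \nu \coloneqq \sum_{i=1}^n \mu(A_i)\,\delta_{x_i},
\]
which is finitely supported and has the same total mass as $\mu$, and hence as $\mu \ast \delta_{\pm 2u}$.

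The next step is to verify the two stochastic inequalities summand by summand. Decomposing $\mu = \sum_i \mu|_{A_i}$ and distributing convolution over addition, I write $\mu \ast \delta_{+2u} = \sum_i (\mu|_{A_i}) \ast \delta_{+2u}$ and compare the $i$-th summand with $\mu(A_i)\delta_{x_i}$. Since $[-u,+u] = (-u + G_+) \cap (u - G_+)$ is closed, I have $\supp(\mu|_{A_i}) \subseteq \overline{A_i} \subseteq x_i + [-u,+u]$, so the support of $(\mu|_{A_i}) \ast \delta_{+2u}$ lies in $x_i + [u,3u] \subseteq x_i + G_+$; every point there is $\ge x_i$. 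Since both measures have total mass $\mu(A_i)$, \Cref{supp_crit} applies and gives $\mu(A_i)\delta_{x_i} \le (\mu|_{A_i}) \ast \delta_{+2u}$. Because the stochastic preorder is preserved under taking sums of measures (immediate from the closed-upset characterization in \Cref{stochastic_order_char}), summing over $i$ yields $\nu \le \mu \ast \delta_{+2u}$. The reverse inequality $\mu \ast \delta_{-2u} \le \nu$ follows by the symmetric argument: the support of $(\mu|_{A_i}) \ast \delta_{-2u}$ lies in $x_i + [-3u,-u] \subseteq x_i - G_+$, which is below $x_i$.

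No step here looks like a genuine obstacle; the crux is simply using the neighbourhood property of the order unit to produce a cover, and then reducing it to a finite one by compactness of $\supp{\mu}$. Notice that the other defining property \ref{orderunit} of an order unit is not used in this lemma, since here the compactness of the support already produces boundedness; condition \ref{orderunit} will instead enter through \Cref{MG_polygrowth}. The one place where a bit of care is needed is ensuring that $\overline{V} \subseteq [-u,+u]$ so that the support analysis goes through cleanly, which is automatic from closedness of $[-u,+u]$.
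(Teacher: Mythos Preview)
Your proof is correct and follows essentially the same approach as the paper's: cover $\supp{\mu}$ by finitely many translates of an order-unit neighbourhood, partition the support accordingly, place Dirac masses at the reference points with the right weights, and verify the two stochastic inequalities piecewise via supports. The only cosmetic differences are that the paper covers directly by the order intervals $[x_i-u,x_i+u]$ and passes to the atoms of the generated Boolean algebra (choosing an arbitrary $y_j$ in each atom rather than using the centres $x_i$), and that it exhibits an explicit coupling $\lambda$ rather than invoking \Cref{supp_crit} summand-by-summand and summing; your route via \Cref{supp_crit} plus additivity of the stochastic preorder is equally valid and arguably a touch cleaner.
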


\begin{proof}
	We assume $\mu(G) = 1$ without loss of generality. Since $\supp{\mu}$ is compact by assumption and $[x-u, x+u]$ is a neighbourhood of $x$ for every $x \in G$, we have finitely many $x_1,\ldots,x_n \in \supp{\mu}$ such that
	\[
		\supp{\mu} \subseteq \bigcup_{i=1}^n \: [x_i - u, x_i + u],
	\]
	as in the proof of \Cref{compact_bounded}.
	The sets $\supp{\mu} \cap [x_i - u, x_i + u]$ generate a finite Boolean algebra of measurable sets with atoms $B_1, \ldots, B_m \subseteq \supp{\mu}$. Upon choosing arbitrary points $y_j \in B_j$, we define
	\[
		\nu \coloneqq \sum_{j=1}^m \mu(B_j) \, \delta_{y_j}.
	\]
	We then argue that $\nu \le \mu \ast \delta_{2u}$; the other claimed inequality works analogously. Indeed consider the measure on $G \times G$ given by
	\[
		\lambda \coloneqq \sum_{j=1}^m \delta_{y_j} \otimes (\mu|_{B_j} \ast \delta_{2u}).
	\]
	Its two marginals are $\nu$ and $\mu \ast \delta_{2u}$, respectively, so it is enough to prove that $\lambda$ is supported on the relation $\le$. But this is because of $B_j \subseteq [x_i - u, x_i + u]$ for some $i$, which implies the relevant $y_j \in \down{(B_j + 2u)}$.
\end{proof}

Before we can state our recent \emph{Vergleichsstellensatz}~\cite[Theorem~8.6]{vssII} for preordered semirings, we need one more definition.

\begin{defn}
	If $S$ and $T$ are semialgebras, then a \newterm{semialgebra homomorphism} from $S$ to $T$ is a map $\phi : S \to T$ which preserves addition, multiplication, and scalar multiplication: for all $x,y \in S$ and $r \in \R_+$,
	\[
		\phi(x + y) = \phi(x) + \phi(y), \qquad \phi(xy) = \phi(x) \phi(y), \qquad \phi(rx) = r \phi(x),
	\]
	and also preserves the neutral elements, $\phi(0) = 0$ and $\phi(1) = 1$.
\end{defn}

Now here is~\cite[Theorem~8.6]{vssII}, specialized to the case of preordered semialgebras and to the case where the power universal element $v$ is invertible; both of these assumptions result in some small simplifications.

\begin{thm}
	\label{vss}
	Let $S$ be a preordered semialgebra with a power universal element $v \in S$ that is multiplicatively invertible, and suppose that $S$ comes equipped with a surjective homomorphism $\|\cdot\| : S \to \R_+$ with trivial kernel and such that
	\[
		a \le b \quad \Longrightarrow \quad \|a\| = \|b\| \quad \Longrightarrow \quad a \sim b,
	\]
	where $\sim$ denotes the equivalence relation generated by $\le$.
	\medskip

	\noindent For $x, y \in S$ with $\|x\| = \|y\| = 1$, consider the following conditions:
	\begin{enumerate}
		\item\label{cat_order} There is $a \in S$ with $\|a\| = 1$ and
			\begin{equation}
				a x \le a y.
			\end{equation}
		\item\label{asymp_order} For $n \ge 1$, we have
			\begin{equation}
				\label{asymp_order_eq}
				x^n \le y^n.
			\end{equation}
		\item\label{sper_ineqs} With $\prec$ standing for $<$ or $\le$, the following hold:
			\begin{enumerate}
				\item\label{phi_ineq} For every monotone semialgebra homomorphism $\phi : S \to \mathbb{K}$ with $\mathbb{K} \in \{\R_+, \R_+^\op, \TR_+, \TR_+^\op\}$, and such that $\phi$ has trivial kernel and does not factor through $\|\cdot\|$, we have
					\[
						\phi(x) < \phi(y).
					\]
				\item\label{D_ineq} For every nonzero $\R_+$-linear monotone map $D : S \to \R$ which satisfies the Leibniz rule $D(ab) = D(a) \, \|b\| + \|a\| \, D(b)$, we have
					\[
						D(x) < D(y).
					\]
			\end{enumerate}
	\end{enumerate}
	Then \ref{cat_order} or \ref{asymp_order} for some $n \ge 1$ implies that \ref{sper_ineqs} holds with non-strict inequalities.
	Conversely if \ref{sper_ineqs} holds with strict inequalities, then \ref{cat_order} and \ref{asymp_order} for all $n \gg 1$ follow.
\end{thm}

Here, the forward direction is very simple to prove, and follows directly upon applying the relevant maps $\phi$ and $D$ to the assumed inequality and cancelling either the resulting term involving $a$ or the $n$-th power. The final sentence going from \ref{sper_ineqs} to \ref{cat_order}$+$\ref{asymp_order} is much deeper and requires a substantial theory development~\cite{vssI,vssII}. Note that it is a converse to the forward direction in generic cases, in the sense that the only difference\footnote{Another subtle difference concerns the $n$ in \eqref{asymp_order_eq}. The converse direction is \emph{stronger} in this respect than would be required for a converse, since it allows us to conclude \eqref{asymp_order_eq} \emph{for all} $n \ge 1$ rather than merely for some $n \ge 1$.} is the strictness of the inequalities and inequalities are generically strict.

\begin{rem}
	\label{normalize_general}
	For a monotone homomorphism $\phi : S \to \TR_+$ or $\phi : S \to \TR_+^\op$, also any power $\phi^r$ for $r > 0$ is a homomorphism of the same type.
	This implies that in \ref{phi_ineq}, we can additionally restrict to those $\phi$ which satisfy the normalization condition $\phi(v) = e$ for $\phi : S \to \TR_+$, or $\phi(v) = e^{-1}$ for $\phi : S \to \TR_+^\op$.
	Note that any other real number $> 1$ could be used in place of $e$, but this choice is convenient in combination with the natural logarithm as we use it in \eqref{lev}.

	A similar statement applies to the maps $D$ in~\ref{D_ineq}, where we can restrict to those $D$ which satisfy $D(v) = 1$.
	The reason is that we have $D(v) \ge D(1) = 0$ by monotonicity, and $D(v) = 0$ would imply $D = 0$, which is assumed not to be the case.
\end{rem}

In order to apply \Cref{vss} in a concrete case, it is necessary to characterize first the inequalities required by conditions~\ref{phi_ineq} and~\ref{D_ineq}.
We will do this for $\measc{G}$ in the next section.

Since the inequalities in conditions~\ref{phi_ineq} and~\ref{D_ineq} can seem a bit unwieldy, we now explain how these inequalities can be organized and subsumed into one single structure. 
There are five types of relevant inequalities corresponding to various types of monotone maps out of $S$. In~\cite{vssII}, we have introduced the following terminology for talking about these types.
\begin{itemize}
	\item Monotone homomorphisms $\phi : S \to \TR_+$ are \newterm{max-tropical}.
	\item Monotone homomorphisms $\phi : S \to \R_+$ are \newterm{max-temperate}.
	\item Monotone $\R_+$-linear maps $D : S \to \R$ satisfying the Leibniz rule are \newterm{arctic}.
	\item Monotone homomorphisms $\phi : S \to \R_+^\op$ are \newterm{min-temperate}.
	\item Monotone homomorphisms $\phi : S \to \TR_+^\op$ are \newterm{min-tropical}.
\end{itemize}
It is useful to consider all of these together as defining a family of inequalities parametrized by a suitable topological space.
We recall the construction of this space here, assuming that $S$ is as in \Cref{vss}.

\begin{defn}[{\cite[Section~8]{vssII}}]
	\label{tsper_defn}
	The \newterm{test spectrum} $\Sper{S}$ is the disjoint union
	\begin{align*}
		\Sper{S} \: \coloneqq \:	& \: \{\text{\normalfont{monotone homs }} \phi : S \to \R_+ \text{\normalfont{ or }} \phi : S \to \R_+^\op \} \setminus \{\|\cdot\|\} \\
					& \sqcup \{\text{\normalfont{monotone homs }} \phi : S \to \TR_+ \text{\normalfont{ with }} \phi(v) = e\} \\
					& \sqcup \{\text{\normalfont{monotone homs }} \phi : S \to \TR_+^\op \text{\normalfont{ with }} \phi(v) = e^{-1} \} \\
					& \sqcup \{\text{\normalfont{monotone $\R_+$-linear derivations }} D : S \to \R \text{\normalfont{ with }} D(v) = 1 \}.
	\end{align*}
	and carries the coarsest topology which makes the \newterm{logarithmic evaluation maps}
	\begin{equation}
		\label{lev}
		\lev_x(\phi) \coloneqq \frac{\log \phi(x)}{\log \phi(u)},	\qquad		\lev_x(D) \coloneqq D(x)
	\end{equation}
	continuous for all $x \in S$ with $\|x\| = 1$.
\end{defn}

Note that there is a minor difference relative to~\cite[Definitions~8.3 and 8.4]{vssII}, namely that we now define logarithmic evaluation maps $\lev_x$ rather than logarithmic comparison maps.
Let us explain how this simplification is possible due to the semialgebra structure and results in an equivalent topology.
The logarithmic comparison maps which in terms of the above maps are defined for $x, y \in S$ with $\|x\| = \|y\| = 1$ as
\[
	\lc_{x,y} \coloneqq \lev_y - \lev_x.
\]
It is sufficient to consider the case $\|x\| = \|y\| = 1$ rather than the weaker $\|x\| = \|y\|$ as in \cite[Definition~8.4]{vssII}, since we are only dealing with the semialgebra case, and $x$ and $y$ can be normalized by scalar multiplication.
Then $\lc_{x,y}$ is continuous if both $\lev_x$ and $\lev_y$ are; and conversely, $\lev_x = \lc_{v,x} - 1$ is continuous as soon as the $\lc_{x,y}$ are.

\begin{prop}[{\cite[Proposition~8.5]{vssII}}]
	\label{sper_chaus}
	The test spectrum $\Sper{S}$ is a compact Hausdorff space.
\end{prop}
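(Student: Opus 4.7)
The plan is to exhibit $\Sper{S}$ as a closed subspace of a compact Hausdorff product of real intervals, via the logarithmic evaluation maps. Since the topology on $\Sper{S}$ is by definition the coarsest making every $\lev_x$ continuous, once each $\lev_x$ is shown to take values in some bounded interval $[-M_x, M_x]$, the map
\[
    \Psi : \Sper{S} \longrightarrow \prod_{\|x\|=1} [-M_x, M_x], \qquad \Psi(\phi) \coloneqq (\lev_x(\phi))_{\|x\|=1},
\]
is a continuous topological embedding. Hausdorffness and compactness of $\Sper{S}$ then reduce, respectively, to injectivity of $\Psi$ and closedness of its image in the compact Hausdorff product.

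For the uniform bound, the key is power universality combined with the axiom $\|a\| = \|b\| \Rightarrow a \sim b$ from \Cref{vss}. Since $\|x\| = 1 = \|1\|$, there is a finite zigzag chain $1 = z_0, z_1, \ldots, z_n = x$ of $\le$-comparisons; applying power universality at each step and telescoping yields $v^{-K_x} \le x$ and $x \le v^{K_x}$ in the appropriate preorder sense, where $K_x \in \N$ depends only on $x$ and the chosen chain. On the temperate strata this gives $\phi(v)^{-K_x} \le \phi(x) \le \phi(v)^{K_x}$, and the assumption $\phi \ne \|\cdot\|$ guarantees $\log \phi(v) \ne 0$ (otherwise the same chain argument together with $\R_+$-linearity would force $\phi = \|\cdot\|$), so that $|\lev_x(\phi)| \le K_x$. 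On the tropical strata, the normalizations $\phi(v) = e^{\pm 1}$ make the bound direct, and for the derivation stratum a parallel telescoping based on $D(1) = 0$, the Leibniz rule, and $D(v) = 1$ yields $|D(x)| \le K'_x$. Injectivity of $\Psi$ then follows from a normalization argument: two distinct $\phi, \phi'$ differ at some $y$, so $x \coloneqq (y+1)/\|y+1\|$ has unit norm and $\phi(x) \ne \phi'(x)$, and a short case analysis across strata shows $\lev_x(\phi) \ne \lev_x(\phi')$, the delicate case being two temperate maps with distinct $\phi(v)$, where coincidence of all $\lev_z$ on $\|z\| = 1$ would force $\phi(z) = \phi'(z)^\alpha$ for constant $\alpha > 0$, and additivity would then force $\alpha = 1$.

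The principal obstacle is closedness of the image $\Psi(\Sper{S})$ in the product. The defining axioms translate into closed conditions on the $\lev_x$: the multiplicative identity $\lev_{xy} = \lev_x + \lev_y$ for unit-norm $x, y$ holds uniformly on all strata and is manifestly closed, whereas additivity and monotonicity take stratum-dependent forms involving the auxiliary quantity $\log \phi(v)$, which is recoverable within each stratum from the $\lev_x$ values. The crucial technical point is that limits of nets crossing strata still lie in $\Sper{S}$: a net of temperate maps whose $\log \phi_n(v)$ escapes to $\pm\infty$ limits to a tropical point of the corresponding sign, while a net in which $\log \phi_n(v) \to 0$ with evaluations rescaled appropriately limits to a derivation. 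Once this stratum-crossing compatibility is secured, closedness of the image in the compact Hausdorff product follows and the proposition is established.
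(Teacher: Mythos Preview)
The paper does not give its own proof of this proposition: it is merely recalled from the companion paper~\cite[Proposition~8.5]{vssII}, so there is no in-text argument to compare against directly.

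Your strategy---embedding $\Sper{S}$ into a Tychonoff cube $\prod_{\|x\|=1}[-M_x,M_x]$ via the logarithmic evaluation maps and then verifying injectivity and closedness of the image---is the standard one for results of this type and is almost certainly the approach taken in~\cite{vssII}. The bound $v^{-K_x}\le x\le v^{K_x}$ obtained by telescoping a $\sim$-zigzag through power universality (using invertibility of $v$) is correct, as is the observation that $\phi(v)=1$ for a temperate $\phi$ forces $\phi=\|\cdot\|$ on norm-one elements and hence everywhere by $\R_+$-linearity.

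Where your write-up remains a sketch is precisely where the real work lies: the closedness of $\Psi(\Sper{S})$. You correctly identify that the subtle part is the stratum-crossing behaviour---temperate nets with $\log\phi_n(v)\to\pm\infty$ limiting to tropical points, and those with $\log\phi_n(v)\to 0$ limiting to derivations---but the verification that such limits actually satisfy the defining axioms of the target stratum (in particular, that a limit of rescaled temperate homomorphisms is genuinely a derivation satisfying the Leibniz rule, or genuinely a tropical homomorphism sending $+$ to $\max$) is asserted rather than carried out. This is not a wrong approach, just an incomplete one; filling in that case analysis is exactly the content of the cited proposition.
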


By construction, every nonzero semiring element $x \in S$ defines a continuous map $\lev_x : \Sper{S} \to \R$, or equivalently an element of the algebra of continuous functions $\C(\Sper{S})$. Moreover, the map
\begin{align*}
	\lev \: : \: {} & S \longrightarrow \C(\Sper{S})	\\
		& x \longmapsto \lev_x
\end{align*}
is reminiscent of the Gelfand transform; although it is not a semiring homomorphism, the fact that it maps multiplication to addition is perfectly sufficient for our purposes.
For $\measc{G}$, we will see in \Cref{main_sec} that it corresponds to the formation of the normalized cumulant-generating function.

In terms of the test spectrum, the conditions~\ref{phi_ineq} and~\ref{D_ineq} of \Cref{vss} can now be rephrased as saying that
\[
	\lev_x < \lev_y,
\]
where the strict inequality must be pointwise strict on $\Sper{S}$.
If these inequalities hold, then the catalytic and asymptotic ordering of \ref{catalytic} and \ref{asymptotic} follow; and conversely if the latter hold, then we must have $\lev_x \le \lev_y$ pointwise.

\section{Asymptotic comparison of random walks}
\label{main_sec}

In order to apply \Cref{vss} to $\measc{G}$, we thus still need to determine the relevant test spectrum. Throughout, $G$ will still be a preordered topological abelian group with positive cone $G_+$ and order unit $u \in G_+$ in the sense of \Cref{nicely_ordered}. We write
\[
	G^*_+ \coloneqq \{ \textrm{monotone group homomorphisms } G \to \R\},
\]
and we also denote the application of $t \in G^*_+$ to a group element $x \in G$ by $\langle t, x\rangle \coloneqq t(x)$.

\begin{lem}
	\label{state_cont}
	Every $t \in G^*_+$ is continuous as a map $t : G \to \R$.
\end{lem}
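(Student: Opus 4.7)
The plan is to reduce to continuity at $0$: for a group homomorphism between topological groups, continuity at the identity already implies continuity everywhere, since translation is a homeomorphism on both sides. Because $u$ is an order unit, the order interval $[-u,+u]$ is a neighbourhood of $0$ by condition~\ref{intervalnbhd} of \Cref{nicely_ordered}. Monotonicity of $t$ together with $t(-u) = -t(u)$ forces $t([-u,+u]) \subseteq [-t(u),+t(u)]$, so $t$ is already bounded on a neighbourhood of $0$; in particular $t(u) \ge 0$.

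To upgrade this local boundedness to continuity at $0$, I would exploit the continuity of addition in $G$ to manufacture arbitrarily small neighbourhoods of $0$ whose iterated Minkowski sums still lie inside $[-u,+u]$. Concretely, set $V_0 \coloneqq [-u,+u]$ and inductively choose a neighbourhood $V_{k+1}$ of $0$ with $V_{k+1} + V_{k+1} \subseteq V_k$, using continuity of $+ : G \times G \to G$ at $(0,0)$. Then for $x \in V_k$ one has $2x \in V_{k-1}$, and iterating yields $2^k x \in V_0 = [-u,+u]$, so monotonicity of $t$ gives $|t(x)| \le 2^{-k} t(u)$.

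Given $\varepsilon > 0$, choosing $k$ large enough that $2^{-k} t(u) < \varepsilon$ therefore produces a neighbourhood $V_k$ of $0$ on which $|t| < \varepsilon$, which is the desired continuity at $0$. No step is genuinely difficult; the only conceptual point is to recognise that condition~\ref{intervalnbhd} of \Cref{nicely_ordered} is precisely the Archimedean input needed to translate the algebraic monotonicity of $t$ into topological control, and that the group-theoretic halving of neighbourhoods replaces the scalar division that would be available in a topological vector space.
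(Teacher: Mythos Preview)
Your proof is correct and follows essentially the same approach as the paper: reduce to continuity at $0$, then use continuity of the group operations together with monotonicity of $t$ to trap $t$ in arbitrarily small intervals on suitable neighbourhoods of $0$. The only cosmetic difference is that the paper uses continuity of the single map $x \mapsto nx$ (so that $\{x : -u \le nx \le u\}$ is a neighbourhood of $0$ contained in $t^{-1}([-1/n,1/n])$), whereas you iteratively halve neighbourhoods via continuity of addition to obtain $2^{-k}$-control; these are the same idea in slightly different packaging.
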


Although this can be seen as a consequence of the purely algebraic~\cite[Proposition~7.18]{goodearl}, it is easy enough to give a direct proof.

\begin{proof}
	By monotonicity, we obviously have $\langle t, u \rangle \ge 0$.
	If $\langle t, u \rangle = 0$, then $t = 0$ by the order unit property, and hence $t$ is trivially continuous.
	We can therefore assume $\langle t, u \rangle = 1$ without loss of generality.

	It is enough to show that $t$ is continuous at zero, or equivalently that for every $n \in \Nplus$ the set $t^{-1}([-\frac{1}{n},+\frac{1}{n}])$ is a neighbourhood of zero.
	But this is the case because the map
	\[
		G \longrightarrow G, \qquad x \longmapsto n x
	\]
	is continuous, and hence the left-hand side of
	\[
		\{ x \in G \mid -u \le n x \le u \} \: \subseteq \: t^{-1}\Big(\Big[-\frac{1}{n},+\frac{1}{n}\Big]\Big)
	\]
	is a neighbourhood of zero (since it is for $n = 1$ by \Cref{nicely_ordered}).
\end{proof}

We now characterize the five kinds of points of the test spectrum, starting with the max-temperate case.
This involves the moment-generating function.

\begin{lem}
	\label{mgf_char}
	The monotone semialgebra homomorphisms $\phi : \measc{G} \to \R_+$ are precisely the maps of the form
	\beq
		\label{mgf_def}
		\mu \longmapsto \int e^{\langle t, x\rangle} \, d\mu(x)
	\eeq
	for some $t \in G^*_+$. 
\end{lem}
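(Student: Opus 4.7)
The plan is to verify both directions of the claimed characterization, with the converse requiring a moment-generating function to be reconstructed from $\phi$ and then identified with $\phi$ on all of $\measc{G}$.

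For the forward direction, given $t \in G^*_+$, the formula $\phi_t(\mu) \coloneqq \int e^{\langle t, x\rangle}\,d\mu(x)$ clearly preserves addition and scalar multiplication of measures by linearity of the integral; it preserves convolution by \eqref{integral_convolute} combined with the factorization $e^{\langle t, g+h\rangle} = e^{\langle t, g\rangle}\,e^{\langle t, h\rangle}$ and Fubini; and it sends $\delta_0$ to $1$. Monotonicity follows because $t$ is continuous by \Cref{state_cont}, so $x \mapsto e^{\langle t, x\rangle}$ is continuous and monotone, and the third characterization of the stochastic preorder in \Cref{stochastic_order_char} applies.

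For the converse, given a monotone semialgebra homomorphism $\phi$, I would first extract $t$ from its action on point masses. Because $\delta_x \ast \delta_{-x} = \delta_0$ and $\phi(\delta_0) = 1$, each $\phi(\delta_x)$ is strictly positive, so the assignment $\psi(x) \coloneqq \log \phi(\delta_x)$ is well-defined; multiplicativity of $\phi$ together with $\delta_x \ast \delta_y = \delta_{x+y}$ turns $\psi$ into a group homomorphism $G \to \R$. Monotonicity of $\psi$ on $G_+$ follows from the remark after \Cref{stochastic_order_char} (or from \Cref{supp_crit}) which ensures $\delta_x \le \delta_y$ whenever $x \le y$. Thus $t \coloneqq \psi$ lies in $G^*_+$, and by construction $\phi$ and $\phi_t$ agree on every point mass, hence on every finitely supported measure by additivity and $\R_+$-linearity.

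The main obstacle is to promote this agreement from finitely supported measures to arbitrary $\mu \in \measc{G}$, since \Cref{finite_support_dense} only yields a rough sandwich whose multiplicative error does not shrink. My plan is to amortize this error by taking convolution powers: applying \Cref{finite_support_dense} to $\mu^{\ast n}$ produces a finitely supported $\nu_n$ with
\[
	\mu^{\ast n} \ast \delta_{-2u} \: \le \: \nu_n \: \le \: \mu^{\ast n} \ast \delta_{+2u}.
\]
Applying $\phi$ on the right half and $\phi_t$ on the left half, and using the already-established equality $\phi(\nu_n) = \phi_t(\nu_n)$ for this finitely supported $\nu_n$, one obtains
\[
	\phi_t(\mu)^n \, e^{-2\langle t,u\rangle} \: \le \: \phi(\mu)^n \, e^{+2\langle t,u\rangle},
\]
and symmetrically with $\phi$ and $\phi_t$ interchanged. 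Taking $n$-th roots dilutes the factor to $e^{\pm 4\langle t,u\rangle/n}$, which tends to $1$ as $n \to \infty$, forcing $\phi(\mu) = \phi_t(\mu)$. The minor wrinkle that $\phi(\mu)$ could a priori be zero for nonzero $\mu$ is handled by observing that $\phi_t(\mu) > 0$ (the integrand is strictly positive and $\mu \neq 0$), and then the same sandwich inequality forces $\phi(\mu) > 0$ as well, legitimizing the root-extraction step.
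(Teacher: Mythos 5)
Your proposal is correct and follows essentially the same route as the paper's proof: verify the forward direction via \eqref{integral_convolute} and \Cref{stochastic_order_char}, extract $t$ from the point masses, match $\phi$ with $\phi_t$ on finitely supported measures, and then use \Cref{finite_support_dense} applied to convolution powers $\mu^{\ast n}$ to dilute the multiplicative error $e^{O(\langle t,u\rangle)}$ to $e^{O(\langle t,u\rangle)/n} \to 1$. Your additional checks (positivity of $\phi(\delta_x)$ via $\delta_x \ast \delta_{-x} = \delta_0$, and ruling out $\phi(\mu)=0$) are sound refinements of details the paper leaves implicit.
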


\begin{proof}
	Every such map is clearly $\R_+$-linear by linearity of the integral. The multiplicativity follows by the formula~\eqref{integral_convolute} for the integral of a function against a convolution,
	\[
		\int e^{\langle t, x \rangle} \, d(\mu \ast \nu)(x) = \iint e^{\langle t, y + z \rangle} \, d\mu(y) \, d\nu(z) = \left( \int e^{\langle t, y\rangle} \, d\mu(y) \right) \left( \int e^{\langle t, z\rangle} \, d\nu(z) \right) .
	\]
	Monotonicity in $\mu$ holds because the integrand $x \mapsto e^{\langle t, x\rangle}$ is a monotone and lower semi-continuous function, where the latter is a consequence of \Cref{state_cont}, and therefore \Cref{stochastic_order_char} applies.

	For the converse, let $\phi : \measc{G} \to \R_+$ be a monotone homomorphism. Restricting $\phi$ along the inclusion homomorphism $G \to \measc{G}$ from~\eqref{unit_monad} shows that\footnote{We have $\phi(\delta_x) > 0$ since $\delta_x$ is invertible by $\delta_x \ast \delta_{-x} = \delta_0 = 1$.}
	\[
		t(x) \coloneqq \log \phi(\delta_x)
	\]
	defines an element $t \in G^*_+$. The formula~\eqref{mgf_def} then holds by definition for all delta measures, and $\R_+$-linearity implies that it therefore also holds for all finitely supported measures. But then together with monotonicity, \Cref{finite_support_dense} shows that the value $\phi(\mu)$ for any $\mu$ differs from \eqref{mgf_def} by a factor of at most $\phi(2u)$. Applying this statement to a power $\mu^{\ast n}$ and taking $n \to \infty$ proves that $\phi(\mu)$ actually coincides with \eqref{mgf_def}.
\end{proof}

Let us consider the max-tropical case next.

\begin{lem}
	\label{tr_char}
	The monotone semialgebra homomorphisms $\phi : \measc{G} \to \TR_+$ are precisely the maps of the form
	\beq
		\label{max_defn}
		\mu \longmapsto \exp\left({\max_{x \,\in\, \supp{\mu}} \langle t, x \rangle}\right)
	\eeq
	for some $t \in G^*_+$.
\end{lem}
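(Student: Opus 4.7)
My plan is to mirror the structure of the proof of \Cref{mgf_char}, with the tropical analogue of integration against $e^{\langle t, \cdot\rangle}$ being the exponential of its maximum over the support; the key tools will be \Cref{stochastic_order_char} for monotonicity, the identity $\supp{\mu \ast \nu} = \supp{\mu} + \supp{\nu}$ recorded after \eqref{products} for multiplicativity, and \Cref{finite_support_dense} to pass from finitely supported to general measures. For the direction that every $t \in G^*_+$ yields a monotone semialgebra homomorphism via \eqref{max_defn}, I would check additivity from $\supp{\mu + \nu} = \supp{\mu} \cup \supp{\nu}$ (addition in $\TR_+$ being the maximum), multiplicativity from the support identity above together with linearity of $\langle t, \cdot\rangle$, and compatibility with scalar multiplication from $\supp{r\mu} = \supp{\mu}$ for $r > 0$, matching the convention $rx = x$ in $\TR_+$. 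Monotonicity would follow on applying condition (a) of \Cref{stochastic_order_char} to the sets $C_c \coloneqq \{x \in G : \langle t, x\rangle \ge c\}$: these are closed upsets by \Cref{state_cont} and the monotonicity of $t$, so $\mu \le \nu$ forces $\supp{\nu} \cap C_c \neq \emptyset$ whenever $c < \max_{\supp{\mu}} \langle t, x\rangle$, giving the desired domination of maxima.

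For the converse, let $\phi : \measc{G} \to \TR_+$ be a monotone semialgebra homomorphism. Restricting along $x \mapsto \delta_x$ produces a monotone monoid homomorphism into the multiplicative monoid of $\TR_+$, so $t(x) \coloneqq \log \phi(\delta_x)$ defines an element of $G^*_+$. On any finitely supported $\nu = \sum_j r_j \delta_{y_j}$ with $r_j > 0$, the $\TR_+$-arithmetic of \Cref{rtr} immediately gives $\phi(\nu) = \max_j \phi(\delta_{y_j}) = \exp(\max_j \langle t, y_j\rangle)$, which is \eqref{max_defn} for $\nu$.

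To push this to arbitrary $\mu \in \measc{G}$, I would invoke \Cref{finite_support_dense} to produce a finitely supported $\nu$ with $\mu \ast \delta_{-2u} \le \nu \le \mu \ast \delta_{+2u}$. Monotonicity and multiplicativity of $\phi$, together with $\phi(\delta_{\pm 2u}) = \exp(\pm 2\langle t, u\rangle)$, force $\log\phi(\nu)$ to lie within $\log\phi(\mu) \pm 2\langle t, u\rangle$. Simultaneously, the construction in \Cref{finite_support_dense} gives $\supp{\nu} \subseteq \supp{\mu}$ and places every $x \in \supp{\mu}$ inside some order interval $[x_i - u, x_i + u]$ that also meets $\supp{\nu}$, whence $\max_{\supp{\nu}} \langle t, y\rangle$ and $\max_{\supp{\mu}} \langle t, x\rangle$ differ by at most $2\langle t, u\rangle$. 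Combining these two estimates shows that $\log \phi(\mu)$ and $\max_{\supp{\mu}} \langle t, x\rangle$ differ by a constant independent of $\mu$. Applying this to $\mu^{\ast n}$ in place of $\mu$---using $\log \phi(\mu^{\ast n}) = n \log \phi(\mu)$ and $\max_{\supp{\mu^{\ast n}}} \langle t, x\rangle = n \max_{\supp{\mu}} \langle t, x\rangle$---then dividing by $n$ and letting $n \to \infty$ eliminates the constant and yields equality. The only real subtlety is the bookkeeping in this approximation step: one must verify that the two-sided error between $\phi(\mu)$ and \eqref{max_defn} genuinely stays bounded by a quantity growing at most linearly in $n$ after tensoring up, so that the normalization by $n$ kills it. Conceptually the argument is no harder than the moment-generating case.
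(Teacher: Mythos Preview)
Your proposal is correct and follows essentially the same strategy as the paper: verify that \eqref{max_defn} is a monotone semialgebra homomorphism, define $t$ from $\phi$ on deltas, match on finitely supported measures, then use \Cref{finite_support_dense} and the power trick to close the gap. Two execution details differ. For monotonicity the paper uses condition~\ref{joint} of \Cref{stochastic_order_char}: a coupling $\lambda$ supported on $\{x \le y\}$ immediately gives, for each $x \in \supp{\mu}$, some $y \in \supp{\nu}$ with $x \le y$, whence the maxima compare. Your route via condition~(a) and the closed upsets $C_c$ also works but needs the extra observation that $\mu(C_c) > 0$ (not just $\supp{\mu} \cap C_c \neq \emptyset$) to force $\nu(C_c) > 0$. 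For the approximation step the paper avoids opening up the construction in \Cref{finite_support_dense}: since the forward direction already shows that the map \eqref{max_defn} is itself monotone, one applies \emph{both} $\phi$ and this map to the sandwich $\mu \ast \delta_{-2u} \le \nu \le \mu \ast \delta_{+2u}$, and the two agree on the finitely supported $\nu$, bounding the ratio by $\phi(\delta_{2u})^2$ without any analysis of where the $y_j$ sit. Your argument extracting $\supp{\nu} \subseteq \supp{\mu}$ from the proof of \Cref{finite_support_dense} is correct but unnecessary.
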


Since $\supp{\mu}$ is compact and $\langle t, - \rangle$ is continuous by \Cref{state_cont}, the maximum in~\eqref{max_defn} is attained.

\begin{proof}
	Since the support of the sum of two measures is exactly the union of the supports, such a map indeed takes addition to max.
	It also takes the zero measure to $e^{-\infty} = 0$, and a positive scalar multiple $r \mu$ to the same number as any measure $\mu$ itself. Since the support of a convolution is exactly the Minkowski sum of the supports, which becomes multiplication upon exponentiation, it follows that~\eqref{max_defn} indeed defines a semialgebra homomorphism. Monotonicity follows from the characterization of the stochastic preorder of \Cref{stochastic_order_char}\ref{joint} in terms of a joint distribution $\lambda$ supported on the relation $\le$: considering the support of $\lambda$ as a subset of $G \times G$ shows that if $\mu \le \nu$ and $x \in \supp{\mu}$, then there must be $y \in \supp{\nu}$ with $x \le y$.

	Conversely, suppose that $\phi : \measc{G} \to \TR_+$ is a monotone semialgebra homomorphism. Then define an element $t \in G^*_+$ by $t(x) \coloneqq \log \phi(\delta_x)$. The assumption that $\phi$ preserves multiplication shows that $t(x + y) = t(x) + t(y)$, while monotonicity of $t$ is obvious. Hence indeed $t \in G^*_+$. The $\R_+$-linearity of $\phi$ then implies that $\phi$ coincides with \eqref{max_defn} for all finitely supported $\mu$. For general $\mu$, we again use \Cref{finite_support_dense}, which shows that $\phi(\mu)$ differs from the value of \eqref{max_defn} by a factor of at most $\phi(2u)$. Applying this statement to the powers $\mu^{\ast n}$ and taking $n \to \infty$ proves the claim.
\end{proof}

\begin{rem}
	\label{op_char}
	Replacing $G_+$ by $-G_+$ in \Cref{mgf_char} and \Cref{tr_char} shows that the monotone homomorphisms $\measc{G} \to \R_+^\op$ and $\measc{G} \to \TR_+^\op$ are also of the specified form, but with $-t$ in place of $t$. The previous two lemmas thus also characterize the min-temperate and min-tropical parts of the real spectrum.
\end{rem}

\begin{lem}
	\label{der_char}
	The maps $D : \measc{G} \to \R$ which are monotone, $\R_+$-linear and satisfy the Leibniz rule
	\[
		D(\mu\ast\nu) = D(\mu) \nu(G) + \mu(G) D(\nu)
	\]
	are precisely the maps of the form
	\beq
		\label{der_defn}
		\mu \longmapsto \int \langle t,x \rangle \, d\mu(x)
	\eeq
	for $t \in G^*_+$.
\end{lem}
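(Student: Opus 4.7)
The plan is to follow the same two-step template used for \Cref{mgf_char} and \Cref{tr_char}: verify that every $t \in G^*_+$ induces such a derivation, then recover $t$ from any given $D$ by restricting along $x \mapsto \delta_x$ and extending back to all of $\measc{G}$ via finite approximation plus a power trick.

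For the forward direction, the $\R_+$-linearity of $\mu \mapsto \int \langle t,x\rangle \, d\mu(x)$ is immediate from linearity of the integral. The Leibniz rule is a one-line computation using the convolution formula~\eqref{integral_convolute}: $\int \langle t, x\rangle \, d(\mu\ast\nu)(x) = \iint (\langle t,y\rangle + \langle t,z\rangle) \, d\mu(y)\, d\nu(z) = D(\mu)\,\nu(G) + \mu(G)\,D(\nu)$. Monotonicity follows from \Cref{stochastic_order_char}\ref{joint} because $x \mapsto \langle t,x\rangle$ is monotone by hypothesis and continuous by \Cref{state_cont}, hence lower semi-continuous.

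For the converse, suppose $D : \measc{G} \to \R$ is monotone, $\R_+$-linear, and satisfies the Leibniz rule. Define $t(x) \coloneqq D(\delta_x)$. Applying the Leibniz rule to $\delta_x \ast \delta_y = \delta_{x+y}$ with $\delta_x(G) = \delta_y(G) = 1$ gives $t(x+y) = t(x) + t(y)$, so $t$ is a group homomorphism; monotonicity of $t$ follows from $\delta_x \le \delta_y$ whenever $x \le y$. Hence $t \in G^*_+$. By $\R_+$-linearity, $D$ agrees with $D_t(\mu) \coloneqq \int\langle t,x\rangle\,d\mu(x)$ on every finitely supported measure.

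To pass to an arbitrary $\mu \in \measc{G}$, normalize to $\mu(G) = 1$ and invoke \Cref{finite_support_dense} to obtain a finitely supported $\nu$ with $\mu \ast \delta_{-2u} \le \nu \le \mu \ast \delta_{+2u}$. Applying monotonicity of $D$ and the Leibniz identity $D(\mu \ast \delta_{\pm 2u}) = D(\mu) \pm 2t(u)$, and similarly for $D_t$, yields $|D(\mu) - D_t(\mu)| \le 4 t(u)$. The main obstacle is that this leaves a constant error, but we remove it by the same power trick used previously: both $D$ and $D_t$ satisfy $D(\mu^{\ast n}) = n D(\mu)$ and $D_t(\mu^{\ast n}) = n D_t(\mu)$ (by iterating Leibniz and the additivity of $\langle t,\cdot\rangle$ under convolution, since $\mu(G) = 1$). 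Applying the same estimate to $\mu^{\ast n}$ gives $|D(\mu) - D_t(\mu)| \le 4 t(u) / n$, and letting $n \to \infty$ forces equality.
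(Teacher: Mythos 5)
Your proof is correct and follows essentially the same route as the paper's: verify the three properties directly for the forward direction, and for the converse recover $t(x) = D(\delta_x)$, match $D$ with $D_t$ on finitely supported measures, and then use \Cref{finite_support_dense} together with the Leibniz rule and the power trick $D(\mu^{\ast n}) = nD(\mu)$ to remove the bounded error. You merely spell out the explicit constant $4t(u)$ where the paper is terser, so nothing further is needed.
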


\begin{proof}
	The $\R_+$-linearity of such a map is obvious, and monotonicity holds by monotonicity and continuity of $t$ itself. The Leibniz rule follows again by~\eqref{integral_convolute} and the additivity of $t$,
	\begin{align*}
		\int \langle t, x \rangle \, d(\mu \ast \nu)(x)	& = \iint \langle t, y + z \rangle \, d\mu(y) \, d\nu(z) \\
						& = \left( \int \langle t, y \rangle \, d\mu(y) \right) \nu(G) + \mu(G) \left( \int \langle t, z \rangle \, d\nu(z) \right).
	\end{align*}
	Conversely, suppose that $D : \measc{G} \to \R$ has the relevant properties, and put $t(x) \coloneqq D(\delta_x)$. Then the assumed monotonicity of $D$ and the Leibniz rule show that $t \in G^*_+$. It follows then by $\R_+$-linearity that $D$ coincides with \eqref{der_defn} on the finitely supported $\mu$. To show this for all $\mu$, we consider $\mu(G) = 1$ without loss of generality and again apply \Cref{finite_support_dense}. Together with the Leibniz rule, this shows that $D(\mu)$ for arbitrary $\mu$ differs from \eqref{der_defn} by at most $D(2u)$. Applying this statement to a power $\mu^{\ast n}$ then proves the claim.
\end{proof}

\newcommand{\so}{\,\overset{d}{\le}\,}

We can therefore instantiate \Cref{vss} to the following main result, which we formulate directly in probabilistic terms using random variables. In the following, all inequalities between $G$-valued random variables refer to the preorder on $G$ induced by the positive cone $G_+$, and are to be interpreted as holding almost surely. Let us also say that a random variable is \newterm{Radon} if its distribution is a Radon measure.
For $G$-valued random variables $X$ and $Y$, we also write
\[
	X \so Y
\]
to denote the stochastic preorder in the sense of \Cref{stochastic_order_char} between their distributions.

\begin{thm}
	\label{main_thm}
	Let $G$ be a topological abelian group, preordered with respect to a positive cone $G_+ \subseteq G$ having an order unit $u \in G_+$ such that the order interval $[-u,+u]$ is a neighbourhood of zero. Let all random variables be $G$-valued, compactly supported and Radon, 
	
	Consider the following conditions on random variables $X$ and $Y$:
	\begin{enumerate}[label=(\roman*)]
		\item\label{catalytic} There is a third random variable $Z$, independent of $X$ and $Y$, such that
			\beq
				\label{catalytic_eq}
				X + Z \so Y + Z.
			\eeq
		\item\label{asymptotic} For i.i.d.~copies $(X_i)_{i \in \N}$ and $(Y_i)_{i \in \N}$, there is $n \ge 1$ such that
			\beq
				\label{asymptotic_eq}
				\sum_{i=1}^n X_i \so \sum_{i=1}^n Y_i.
			\eeq
		\item\label{ineqs} With $\prec$ standing for $<$ or $\le$, the following hold for all nonzero $t \in G^*_+$:
			\begin{align}
				\label{mgf_compare}
				\E{e^{\langle t,X \rangle}} & \prec \E{e^{\langle t,Y\rangle}},	& \E{e^{-\langle t,X\rangle}} & \succ \E{e^{-\langle t,Y\rangle}}, \\[6pt]
				\label{tropical_compare}
				\max \langle t,X\rangle & \prec \max \langle t,Y\rangle,		& \min \langle t, X\rangle & \prec \min \langle t, Y\rangle, 
			\end{align}
			\beq
				\label{E_compare}
				\E{\langle t,X\rangle} \prec \E{\langle t,Y\rangle}.
			\eeq
	\end{enumerate}
	Then \ref{catalytic} or \ref{asymptotic} for some $n \ge 1$ implies that \ref{ineqs} holds with non-strict inequalities. Conversely if \ref{ineqs} holds with strict inequalities, then \ref{catalytic} and \ref{asymptotic} for all $n \gg 1$ follow.
\end{thm}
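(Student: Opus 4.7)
My plan is to deduce the theorem from a direct application of \Cref{vss}, taking $S \coloneqq \measc{G}$ as the preordered semialgebra, $v \coloneqq \delta_u$ as the power universal element (invertible via $\delta_u \ast \delta_{-u} = \delta_0 = 1$), the total-mass map $\|\mu\| \coloneqq \mu(G)$ as the norm homomorphism, and the distributions $\mu_X, \mu_Y$ (both of norm $1$) as the elements $x$ and $y$. For the easy direction --- that \ref{catalytic} or \ref{asymptotic} implies \ref{ineqs} with non-strict inequalities --- I will simply apply each of the monotone functionals characterized in \Cref{mgf_char}, \Cref{tr_char}, \Cref{der_char} (together with \Cref{op_char}) to the hypothesized inequality and cancel the common factor $\mu_Z$ or the $n$-th power appearing on both sides, using the Leibniz rule in the arctic case.

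The hypotheses of \Cref{vss} on $\measc{G}$ are essentially immediate: power universality of $\delta_u$ is \Cref{MG_polygrowth}; surjectivity of $\|\cdot\|$ is trivial; $a \le b \Rightarrow \|a\| = \|b\|$ holds by the very definition of the stochastic preorder; and for the converse $\|a\| = \|b\| \Rightarrow a \sim b$, I will use \Cref{compact_bounded} to produce a common $k \in \N$ with $\supp{a} \cup \supp{b} \subseteq \down\{ku\}$ and then invoke \Cref{supp_crit} to obtain $a, b \le \|a\| \cdot \delta_{ku}$, placing $a$ and $b$ in the same $\sim$-class. The heart of the argument is then the identification of the five layers of the test spectrum $\Sper{\measc{G}}$ with the summary statistics appearing in \ref{ineqs}, which is precisely what \Cref{mgf_char}, \Cref{tr_char}, \Cref{der_char} and \Cref{op_char} supply: the (max/min-)temperate, (max/min-)tropical and arctic parts of the spectrum are parameterized by $t \in G^*_+$ via the mgf's $\mu \mapsto \int e^{\pm\langle t, x\rangle}\,d\mu$, the support maxima $\mu \mapsto \exp(\max_{x \in \supp{\mu}} \langle \pm t, x\rangle)$, and the linear expectations $\mu \mapsto \int \langle t, x\rangle\,d\mu$, respectively. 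Since the tropical and arctic conditions are scale-invariant in $t$, the normalization $t(u) = 1$ required by the test spectrum costs nothing, and the quantification ``for all nonzero $t \in G^*_+$'' in \ref{ineqs} matches the spectrum exactly.

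Feeding the strict inequalities of \ref{ineqs} into \Cref{vss} will then produce its conclusions \ref{cat_order} and \ref{asymp_order}: a probability measure $\mu_Z \in \measc{G}$ with $\mu_X \ast \mu_Z \le \mu_Y \ast \mu_Z$ in the stochastic preorder, and, for all sufficiently large $n$, the convolution inequality $\mu_X^{\ast n} \le \mu_Y^{\ast n}$. The joint-distribution realizations demanded by \ref{catalytic} and \ref{asymptotic} then follow directly from \Cref{stochastic_order_char}\ref{joint}. The genuine obstacle of the whole argument has already been dealt with in the preparatory material: identifying the test spectrum via \Cref{mgf_char}, \Cref{tr_char} and \Cref{der_char} (where the work is done by \Cref{finite_support_dense} in passing from finitely supported measures to general compactly supported ones) and \Cref{vss} itself. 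What remains here amounts to bookkeeping and translation between the algebraic and probabilistic languages.
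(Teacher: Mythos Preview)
Your proposal is correct and follows essentially the same route as the paper's own proof: instantiate \Cref{vss} on $\measc{G}$ with $v = \delta_u$ and $\|\mu\| = \mu(G)$, invoke \Cref{MG_polygrowth} for power universality, and identify the test spectrum via \Cref{mgf_char}, \Cref{tr_char}, \Cref{der_char} and \Cref{op_char}. You actually supply more detail than the paper does---the explicit verification that $\|a\| = \|b\| \Rightarrow a \sim b$ via \Cref{compact_bounded} and \Cref{supp_crit}, and the use of \Cref{stochastic_order_char}\ref{joint} to pass from the stochastic-preorder inequalities back to jointly distributed random variables---both of which the paper leaves implicit.
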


As with \Cref{vss} in general, the forward direction is easy to see by applying the respective functions to the assumed inequality. Our main result is the converse direction, and we state the forward direction mainly to indicate that our converse is generically necessary and sufficient: the only difference is in the strictness of the inequalities in~\ref{ineqs}, and these are strict in generic cases.

\begin{proof}
	This follows upon instantiating \Cref{vss} on $\measc{G}$, taking $\|\cdot\| : \measc{G} \to \R_+$ to be given by the normalization homomorphism $\mu \mapsto \mu(G)$, and translating the statement into random variables language. The power universality of $\delta_u$ holds by \Cref{MG_polygrowth}. The relevant monotone quantities are exactly the specified ones, as per \Cref{mgf_char,tr_char,der_char,op_char}, where we have in addition taken the logarithm of those of the form~\eqref{max_defn} for simplicity.
\end{proof}

For $G = \R$ and $G_+ = \R_+$, \Cref{main_thm} specializes to \Cref{Rintro_thm}, since in this case, the stochastic preorder is characterized by the given inequalities between cumulative distribution functions per \eqref{R_stoch_order}.
Also, it is clear that the inequalities \eqref{tropical_compare} and~\eqref{E_compare} only need to be considered for $t = 1$ then, as we have done in \Cref{Rintro_thm}.

\begin{rem}
	It may be worth pointing out that the question whether a ``catalyst'' $Z$ as in~\eqref{catalytic_eq} and the earlier~\eqref{catalytic_ineq} exists depends strongly on whether $Z$ is required to be compactly supported or not. While our result is concerned with the compactly supported case, another recent result of Pomatto, Strack and Tamuz for $G = \R$ shows that such a $Z$ with merely finite first moment exists already as soon as only $\E{X} < \E{Y}$ holds~\cite{PST}.
\end{rem}

\section{The normalized cumulant-generating function}
\label{ncgf}

For successful applications of \Cref{main_thm}, it is imperative to understand the inequalities~\eqref{mgf_compare}--\eqref{E_compare} well, and in particular how they relate to each other. 
This is what we do in this section, by investigating the structure of the test spectrum of $\measc{G}$. In our current context, this space behaves a lot like a projective version of $G^*_+$, so we denote it by $\Proj{G_+}$.

\begin{defn}
	Let $G$ be a topological abelian group preordered with respect to a positive cone $G_+ \subseteq G$ having an order unit $u \in G_+$. Then $\Proj{G_+}$ is the disjoint union of the following five parts:
	\begin{itemize}
		\item The \newterm{max-tropical part}, given by
			\[
				\{ t \in G^*_+ \mid \langle t, u \rangle = 1 \}.
			\]
		\item The \newterm{max-temperate part}, given by
			\[
				G^*_+ \setminus \{0\}.
			\]
		\item The \newterm{arctic part}, given by
			\[
				\{ t \in G^*_+ \mid \langle t, u \rangle = 1 \}.
			\]
		\item The \newterm{min-temperate part}, given by
			\[
				G^*_+ \setminus \{0\}.
			\]
		\item The \newterm{min-tropical part}, given by
			\[
				\{ t \in G^*_+ \mid \langle t, u \rangle = 1 \}.
			\]
	\end{itemize}
	$\Proj{G_+}$ carries the coarsest topology which makes the \newterm{logarithmic evaluation maps} $\lev_\mu : \Proj{G_+} \to \R$ defined as
	\[
		\lev_\mu(t) \coloneqq \begin{cases}
			\max_{x \,\in\, \supp{\mu}} \langle t, x\rangle		& \textrm{ if } t \textrm{ is max-tropical}, \\[4pt]
			\langle t, u\rangle^{-1} \log \int_G e^{\langle t, x\rangle} \, d\mu(x)	& \textrm{ if } t \textrm{ is max-temperate}, \\[4pt]
			\int_G \langle t, x\rangle \, d\mu(x)		& \textrm{ if } t \textrm{ is arctic}, \\[4pt]
			-\langle t, u\rangle^{-1} \log \int_G e^{-\langle t, x\rangle} \, d\mu(x)& \textrm{ if } t \textrm{ is max-temperate}, \\[4pt]
			\min_{x \,\in\, \supp{\mu}} \langle t, x\rangle		& \textrm{ if } t \textrm{ is max-tropical} \\[4pt]
		\end{cases}
	\]
	continuous for all $\mu \in \measc{G}$ with $\mu(G) = 1$.
\end{defn}

\begin{rem}
	Some comments are in order to make sense of this definition.
	\begin{enumerate}
		\item Each of the five parts of $\Proj{G_+}$ is a copy of the positive cone $G^*_+$, where in the arctic and the two tropical parts, one additionally has a normalization condition (as per \Cref{normalize_general}).
			We will not introduce separate notation for the five parts but distinguish them in words.
		\item Using \Cref{mgf_char}--\ref{der_char} together with some calculation, it is straightforward to see that the thus defined $\Proj{G_+}$ is exactly the test spectrum $\Sper{\measc{G}}$ in the sense of \Cref{tsper_defn}.
			
			In particular, $\Proj{G_+}$ is a compact Hausdorff space by \Cref{sper_chaus}.
		\item For nonzero $t \in G^*_+$, consider the associated max-temperate point of $\Proj{G_+}$, and let $r \in \Rplus$ be a scalar. Then $rt$ again represents a max-temperate point with logarithmic evaluation map given by\footnote{As an interesting aside, this function of $r$ is constant whenever $\mu = \delta_x$ for $x \in G$.}
			\beq
				\label{rlev}
				\mu \longmapsto \frac{\log \int_G e^{r \langle t, x\rangle} \, d\mu(x)}{r \langle t, u\rangle}.
			\eeq
			Assuming that $t$ is normalized to $\langle t, u\rangle = 1$, it also defines a max-tropical and an arctic point of $\Proj{G_+}$. And indeed the corresponding logarithmic evaluation maps arise from \eqref{rlev} as limits in $r$: taking $r \to \infty$ recovers the tropical case,
			\[
				\lim_{r \to \infty} \frac{\log \int_G e^{r \langle t, x\rangle} \, d\mu(x)}{r \langle t, u \rangle} = \max_{x \,\in\, \supp{\mu}} \langle t, x\rangle,
			\]
			and $r \to 0$ recovers the arctic case,
			\[
				\lim_{r \to 0} \frac{\log \int_G e^{r \langle t, x\rangle} \, d\mu(x)}{r \langle t, u \rangle} = \int_G \langle t, x\rangle \, d\mu(x),
			\]
			both of which follow by an elementary calculation, assuming that $\mu$ is a probability measure.
			Therefore in the topology on $\Proj{G_+}$, the arctic and max-tropical parts are path-connected to the max-temperate part.
			
			Analogous statements apply in the min-temperate case, where we get
			\[
				\lim_{r \to \infty} \left(- \frac{\log \int_G e^{-r \langle t, x\rangle} \, d\mu(x)}{r \langle t, u \rangle} \right) = \min_{x \,\in\, \supp{\mu}} \langle t, x\rangle,
			\]
			and
			\[
				\lim_{r \to 0} \left(- \frac{\log \int_G e^{-r \langle t, x\rangle} \, d\mu(x)}{r \langle t, u \rangle} \right) = \int_G \langle t, x\rangle \, d\mu(x).
			\]
			Therefore the arctic and min-tropical parts are path-connected to the min-temperate part.
	\end{enumerate}
\end{rem}

\begin{ex}
	With $G = \R^2$ and $G_+ = \R_+^2$ the positive quadrant, we can construct $\Proj{\R_+^2}$ explicitly as follows.
	We start with the positive quadrant $\R_+^2$ and the negative quadrant $\R_-^2$, corresponding to the max-temperate and min-temperate parts, respectively (discounting the origin).
	We compactify this space by throwing in two additional arcs at infinity, corresponding to the max-tropical and min-tropical parts.
	Finally, we blow up the origin to a line, and this represents the arctic part.
	Overall, we thus obtain a space homeomorphic to the closed unit disk.
\end{ex}

The main use of $\Proj{G_+}$ is that it allows us to summarize the inequalities~\eqref{mgf_compare}--\eqref{E_compare} concisely as a pointwise inequality between continuous functions on $\Proj{G_+}$, namely as
\[
	\lev_X \prec \lev_Y,
\]
where we omit notational distinction between a random variable and its distribution. In the non-strict case, this inequality is simply pointwise inequality $\le$ between continuous functions on $\Proj{G_+}$, and similarly in the strict case. The compactness of this space can be useful in concrete applications of \Cref{main_thm}. We will develop one such application in the next section.

\begin{ex}
	\label{R_ncgf}
	Consider $G = \R$ and $G_+ = \R_+$ with order unit $u = 1$. 
	Then unfolding the definition of the logarithmic evaluation maps shows that these are simply the functions of the form
	\begin{equation}
		\label{lev_ncgf}
		\lev_X(t) = \frac{\log \E{e^{tX}}}{t}
	\end{equation}
	for $t \in \R \setminus \{0\}$, reproducing the correct limits as $t \to 0$ and $t \to \pm \infty$, namely $\E{X}$ and $\min X$ and $\max X$.
	In order to get this simple representation, we have reparametrized $t$ as $-t$ in the min-temperate part, and the five parts correspond then exactly to the extended real line as the disjoint union
	\[
		\overline{\R} = \{-\infty\} \cup (-\infty, 0] \cup \{0\} \cup [0, +\infty) \cup \{+\infty\}.
	\]
	The logarithmic evaluation maps in the form~\eqref{lev_ncgf} are also known as the \newterm{normalized cumulant-generating function}.
	The only difference with respect to the usual cumulant-generating function $t \mapsto \log \E{e^{tX}}$ is the normalization in the denominator. 
	Campbell~\cite{campbell} seems to have been the first to notice that this normalization is exactly what makes the cumulant-generating function have the relevant limiting values at $t = 0$ and $t \to \pm \infty$.
	The normalization also facilitates thinking of the normalized cumulant-generating function as a family of weighted averages: the value $\lev_X(t)$ is always in the interval $[\min X, \max X]$ and coincides with these values for $t = \pm\infty$; and for larger $t$ the averaging attributes higher weight to the right, while being ``unbiased'' at $t = 0$, for which we get the usual expectation value.
\end{ex}

For a general preordered topological abelian group $G$, we regard our $\lev_X : \Proj{G_+} \to \R$ as the \emph{definition} of the normalized cumulant-generating function of a $G$-valued random variable $X$.

\section{A relative large deviation result and Cram\'er's theorem}
\label{sec_uld}

We now formulate a weaker version of \Cref{main_thm}, one where we relax the properties under consideration such that only the moment-generating function $t \mapsto \E{e^{\langle t, X\rangle}}$ for $t \in G^*_+$ matters, corresponding to the max-temperate part of $\Proj{G_+}$.
This will provide another perspective on our results in the form of a relative large deviation result, where the decay probabilities of two random walks are compared to each other.
It has turned out to be convenient to phrase this result only in the more specific case of topological vector spaces rather than general topological abelian groups.

The advantages of the following result over \Cref{main_thm} are that it is somewhat easier to state, that it is closer to traditional large deviation theory, and that it makes more explicit how our results can be thought of as a duality between the asymptotic behaviour of random walks and the moment-generating or cumulant-generating function.

\begin{thm}
	\label{uniformd_ld}
	Let $V$ be a topological vector space preordered with respect to a convex cone $V_+ \subseteq V$ with $u \in V_+$ such that $[-u,+u]$ is a neighbourhood of zero. Let all random variables be $V$-valued, compactly supported and Radon.
	
	For random variables $X$ and $Y$ and i.i.d.~copies $(X_i)_{i \in \N}$ and $(Y_i)_{i \in \N}$, we have
	\beq
		\label{quotient_mgf}
		\sup_{\eps > 0} \, \uplim_{n \to \infty} \, \sup_C \, \frac{1}{n} \log \frac{\P{\frac{1}{n}\sum_{i=1}^n X_i \in C}}{\P{\frac{1}{n}\sum_{i=1}^n Y_i + \eps u \in C}} = \sup_{t \in V^*_+} \log \frac{\E{e^{\langle t, X\rangle}}}{\E{e^{\langle t, Y\rangle}}},
	\eeq
	where $C$ ranges over all closed upsets in $V$ and the statement holds in two versions, with $\uplim_{n \to \infty}$ standing for $\liminf_{n \to \infty}$ or $\limsup_{n \to \infty}$.
\end{thm}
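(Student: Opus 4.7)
My plan is to prove \eqref{quotient_mgf} by separately establishing the two inequalities $\le$ and $\ge$. Since $\liminf \le \limsup$ trivially, it suffices to prove the upper bound with $\limsup$ in place of $\uplim$ and the lower bound with $\liminf$ in place, which then forces equality in both versions.

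For the lower bound LHS $\ge$ RHS, I would restrict the $\sup_C$ to the half-space upsets $C_{t_0, c} \coloneqq \{x \in V : \langle t_0, x\rangle \ge c\}$ for each fixed $t_0 \in V^*_+ \setminus \{0\}$ and $c \in \R$. For such $C_{t_0, c}$ both probabilities reduce to one-dimensional tail probabilities of the bounded real-valued i.i.d.~families $\langle t_0, X_i\rangle$ and $\langle t_0, Y_i\rangle$ (with the latter having threshold $c - \eps\langle t_0, u\rangle$ because of the $+\eps u$ shift). The classical one-dimensional Cram\'er theorem supplies precise exponential rates, and a direct Legendre-transform computation yields
\[
	\sup_c \bigl[I_{\langle t_0, Y\rangle}(c - \eps\langle t_0, u\rangle) - I_{\langle t_0, X\rangle}(c)\bigr] = \sup_{s \ge 0}\bigl[\log \E{e^{s\langle t_0, X\rangle}} - \log \E{e^{s\langle t_0, Y\rangle}} - s\eps\langle t_0, u\rangle\bigr],
\]
where the $I$'s are the Legendre transforms of the respective one-dimensional log-moment-generating functions. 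Specializing $s = 1$ bounds this below by $\log(\E{e^{\langle t_0, X\rangle}}/\E{e^{\langle t_0, Y\rangle}}) - \eps\langle t_0, u\rangle$, and taking $\sup_{\eps > 0}$ (i.e.~$\eps \to 0^+$) followed by $\sup_{t_0}$ yields the RHS.

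For the upper bound LHS $\le$ RHS, I would combine a Chernoff upper bound for the numerator with a matching Cram\'er-type lower bound for the denominator. For any $t \in V^*_+$ and any closed upset $C$, Markov's inequality applied to $\{x : \langle t, x\rangle \ge c_t(C)\} \supseteq C$ with $c_t(C) \coloneqq \inf_{x \in C}\langle t, x\rangle$ gives
\[
	\P{\tfrac{1}{n}\textstyle\sum_{i=1}^n X_i \in C} \le e^{-nc_t(C)}\,\E{e^{\langle t, X\rangle}}^n.
\]
For the denominator, the $+\eps u$ shift is exactly what ensures that the tilted mean of $Y$ at an optimal $t$ lies strictly in the interior of $C - \eps u$, so a Cram\'er large-deviation lower bound yields
\[
	\P{\tfrac{1}{n}\textstyle\sum_{i=1}^n Y_i + \eps u \in C} \ge e^{-nc_t(C) + n\eps\langle t, u\rangle}\,\E{e^{\langle t, Y\rangle}}^n\,e^{-o(n)}.
\]
Taking the ratio, optimizing the tilt $t$ over $V^*_+$, and passing to the $\limsup$ then gives the bound $\sup_t \log(\E{e^{\langle t, X\rangle}}/\E{e^{\langle t, Y\rangle}})$ uniformly in $C$.

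The main obstacle is achieving the uniformity over all closed upsets $C$ when matching these Chernoff and Cram\'er bounds: the tilt $t$ must be chosen adaptively with $C$ so that the numerator and denominator bounds share the same exponential rate $nc_t(C)$, up to the controllable correction $\eps\langle t, u\rangle$ furnished by the shift. In the infinite-dimensional setting, the compact-support hypothesis lets all arguments be carried out in a finite-dimensional separable subspace of $V$ containing $\supp{\mu_X}$, $\supp{\mu_Y}$, and the order unit $u$, where classical Cram\'er/Varadhan large-deviation machinery applies without measure-theoretic pathology.
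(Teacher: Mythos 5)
Your lower bound is essentially sound: restricting $\sup_C$ to half-space upsets, applying the one-dimensional Cram\'er lower bound to the numerator at the tilted mean $c_s = \Lambda_{\langle t_0,X\rangle}'(s)$ and Chernoff to the denominator, and using Fenchel--Young does give $\mathrm{LHS} \ge \Lambda_{\langle t_0,X\rangle}(1) - \Lambda_{\langle t_0,Y\rangle}(1) - \eps\langle t_0,u\rangle$ as you claim (modulo routine care at degenerate tilts). The upper bound, however, has a genuine gap, and it is exactly the point you flag as ``the main obstacle'' without resolving it. First, a closed upset $C$ in dimension $\ge 2$ need not be convex (e.g.\ $C = \up\{(0,1)\}\cup\up\{(1,0)\}$ in $\R^2$ with the quadrant order), so the true exponential rate of $\P{\frac{1}{n}\sum_i X_i \in C}$ is $\inf_{x\in C} I_X(x)$, which is strictly smaller than the best single-half-space Chernoff exponent $\sup_t\left(c_t(C) - \log\E{e^{\langle t,X\rangle}}\right)$; dually, your claimed denominator lower bound $e^{-n c_t(C) + n\eps\langle t,u\rangle}\,\E{e^{\langle t,Y\rangle}}^n e^{-o(n)}$ requires the $t$-tilted mean of $Y$ to lie in $C$ while sitting on the level set $\langle t,\cdot\rangle = c_t(C)$, which fails for such non-convex $C$. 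So the numerator and denominator exponents cannot be matched by one adaptive tilt. Second, even where the rates do match, the $\sup_C$ sits \emph{inside} $\limsup_n$, so you need the $e^{-o(n)}$ correction in the Cram\'er lower bound to be uniform over all closed upsets $C$ simultaneously; standard Cram\'er/Varadhan theory gives no such uniformity, and supplying it is the actual content of the theorem. Third, your finite-dimensional reduction is false: a compact subset of a topological vector space need not lie in a finite-dimensional subspace (consider the Hilbert cube in $\ell^2$), so compact supports do not reduce the problem to $\R^d$.

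The paper avoids all of this by not using classical large deviation machinery at all for the hard direction: it introduces $X'$ (equal to $X$ with probability $p$ and to $-ku$ with probability $1-p$) and $Y' = Y + \eps u$, checks that $\lev_{X'} < \lev_{Y'}$ pointwise on the compact test spectrum $\Proj{V_+}$ for suitable $k \gg 1$ whenever $p\,\E{e^{\langle t,X\rangle}} < \E{e^{\langle t,Y\rangle}}$ for all $t$, and then invokes the hard direction of \Cref{main_thm} to get $\frac{1}{n}\sum_i X'_i \le \frac{1}{n}\sum_i Y'_i$ stochastically for all $n \gg 1$, which yields $p^n\,\P{\frac{1}{n}\sum_i X_i \in C} \le \P{\frac{1}{n}\sum_i Y_i + \eps u \in C}$ uniformly in $C$. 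If you want to keep your route, you would need to prove a genuinely uniform multidimensional Cram\'er lower bound over all closed upsets, which is a substantial (and in infinite dimensions, unproven) strengthening of the classical theory.
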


For $V = \R$ and $V_+ = \R$, this specializes to \Cref{Rintro_uniform_ld} from the introduction.

A few further comments are in order before we get to the proof. The fraction on the left-hand side is understood to be $\infty$ if the denominator vanishes and the numerator does not; we similarly stipulate that $\frac{0}{0} \coloneqq 0$, so that those cases for which both vanish do not contribute to $\sup_C$. The fraction on the left is monotonically nondecreasing as $\eps \to 0$, so that the supremum over $\eps > 0$ is equivalently a limit $\eps \to 0$. Finally, replacing $C$ by $C - \eps u$ shows that the $+\eps u$ term in the denominator on the left-hand side can likewise be replaced by an analogous $-\eps u$ term in the numerator, since
\[
	\sup_C \frac{\P{\frac{1}{n}\sum_{i=1}^n X_i - \eps u \in C}}{\P{\frac{1}{n}\sum_{i=1}^n Y_i \in C}} = \sup_C \frac{\P{\frac{1}{n}\sum_{i=1}^n X_i \in C}}{\P{\frac{1}{n}\sum_{i=1}^n Y_i + \eps u \in C}}.
\]
Let us now turn to the proof, which consists of a reduction to \Cref{main_thm}.

\begin{proof}
	We introduce two auxiliary variables in terms of the given ones, and also depending on additional parameters for which we will choose concrete values below.
	\begin{itemize}
		\item For $p \in (0,1)$ and $k \in \N$, consider the variable $X'$ which coincides with $X$ with probability $p$ and is equal to $-ku$ with probability $1-p$.
		\item For $\eps > 0$, consider $Y' \coloneqq Y + \eps u$.
	\end{itemize}
	We also choose corresponding i.i.d.~copies $(X'_i)_{i \in \N}$ and $(Y'_i)_{i \in \N}$.

	These new variables have normalized cumulant-generating functions taking the form, for $t$ in the max-temperate part of $\Proj{V_+}$,
	\begin{align}
	\begin{split}
		\label{levXY}
		\lev_{X'}(t)	& = \frac{\log \left(p \, \E{e^{\langle t, X\rangle}} + (1-p) e^{-k \langle t, u \rangle} \right)}{\langle t, u\rangle},	\\[4pt]
		\lev_{Y'}(t)	& = \frac{\log \E{e^{\langle t, Y\rangle}}}{\langle t, u\rangle} + \eps,
	\end{split}
	\end{align}
	and likewise with $t \mapsto -t$ in the min-temperate part.

	We first prove the inequality $\ge$ in the claimed equation \eqref{quotient_mgf}, with $\liminf_{n \to \infty}$ in place of $\uplim_{n\to\infty}$. 
	This inequality direction is the easy ``forward'' direction, conceptually analogous to the easy forward directions in \Cref{vss,main_thm}. 
	Since this inequality direction is trivial if the left-hand side is $\infty$, we assume that it is finite.
	Then in the definition of $X'$, choose any $p \in (0,1)$ such that
	\[
		- \log p > \sup_{\eps>0} \, \liminf_{n \to \infty} \, \sup_C \frac{1}{n} \log \frac{\P{\frac{1}{n}\sum_{i=1}^n X_i \in C}}{\P{\frac{1}{n}\sum_{i=1}^n Y_i + \eps u \in C}}.
	\]
	The goal is then to show that $- \log p \ge \log \frac{\E{e^{\langle t, X\rangle}}}{\E{e^{\langle t, Y\rangle}}}$ for any $t \in V^*_+$, or equivalently that $p\,\E{e^{\langle t, X\rangle}} \le \E{e^{\langle t, Y\rangle}}$. Indeed for fixed $\eps > 0$ and $p$ as above, choose $n$ such that the inequality
	\[
		- \log p \ge \sup_C \frac{1}{n} \log \frac{\P{\frac{1}{n}\sum_{i=1}^n X_i \in C}}{\P{\frac{1}{n}\sum_{i=1}^n Y_i + \eps u \in C}}
	\]
	still holds. But then this equivalently means that
	\[
		p^n \, \P{\frac{1}{n} \sum_{i=1}^n X_i \in C} \le \P{\frac{1}{n} \sum_{i=1}^n Y'_i \in C} \qquad \forall C.
	\]
	By choosing $k$ large enough and applying \Cref{compact_bounded}, the distribution of $X'$ will have the property that $\frac{1}{n}\sum_{i=1}^n X'_i$ is below the support of $Y$, and hence also below the support of $\frac{1}{n}\sum_{i=1}^n Y'_i$, given that just \emph{one} of the $X'_i$ is equal to $-ku$. This event is complementary to $X'_i = X_i$ for all $i$, which has probability $p^n$. Therefore also
	\[
		\P{\frac{1}{n} \sum_{i=1}^n X'_i \in C} \le \P{\frac{1}{n} \sum_{i=1}^n Y'_i \in C} \qquad \forall C.
	\]
	Thus $\lev_{X'} \le \lev_{Y'}$ on $\Proj{V_+}$ follows from the easy forward direction of \Cref{main_thm}. But then the desired $p\,\E{e^{\langle t, X\rangle}} \le \E{e^{\langle t, Y\rangle}}$ follows from the above formulas~\eqref{levXY} as $\eps \to 0$.

	We now show the other inequality direction $\le$ with $\limsup_{n \to \infty}$ in place of $\uplim_{n \to \infty}$, which is enough to prove the whole claim. Consider now any value of $p$ with
	\beq
		\label{pbound}
		- \log p > \sup_{t \in V^*_+} \log \frac{\E{e^{\langle t, X\rangle}}}{\E{e^{\langle t, Y\rangle}}},
	\eeq
	or equivalently $p \, \E{e^{\langle t, X\rangle}} < \E{e^{\langle t, Y\rangle}}$ for all $t \in V^*_+$, now assuming without loss of generality that the right-hand side of \eqref{quotient_mgf} is finite. Fix $\eps > 0$. Then for every $t$ in the max-temperate part of $\Proj{V_+}$, the formulas~\eqref{levXY} show that there is $k \gg 1$ such that $\lev_{X'}(t) < \lev_{Y'}(t)$. The same strict inequality holds on the max-tropical part because of $\eps > 0$. For every point of the min-tropical, min-temperate and arctic parts, the $+ku$ component of $X'$ dominates for $k \to \infty$, and therefore we can again find $k \gg 1$ such that $\lev_{X'} < \lev_{Y'}$ at every such point. By compactness of $\Proj{V_+}$, it follows that some fixed $k \gg 1$ works for all of these points. Taking all this together, we have $\lev_{X'} < \lev_{Y'}$ on all of $\Proj{V_+}$ for suitable $k \gg 1$.

	Thus \Cref{main_thm} shows that we have, in terms of i.i.d.~copies: for all $n \gg 1$,
	\[
		\P{\frac{1}{n} \sum_{i=1}^n X'_i \in C} \le \P{\frac{1}{n} \sum_{i=1}^n Y'_i \in C} \qquad \forall C.
	\]
	Since $\sum_{i=1}^n X'_i$ coincides with $\sum_{i=1}^n X_i$ with probability at least $p^n$ and because of $\sum_{i=1}^n Y'_i = \sum_{i=1}^n Y_i + \eps u n$, we obtain that for all $n \gg 1$,
	\[
		p^n \, \P{\frac{1}{n} \sum_{i=1}^n X_i \in C} \le \P{\frac{1}{n} \sum_{i=1}^n Y_i + \eps u \in C} \qquad \forall C,
	\]
	which translates into
	\[
		\sup_C \frac{1}{n} \log \frac{\P{\frac{1}{n}\sum_{i=1}^n X_i \in C}}{\P{\frac{1}{n}\sum_{i=1}^n Y_i + \eps u \in C}} \le - \log p.
	\]
	Letting $p$ approach the bound given in~\eqref{pbound} and noting that $\eps > 0$ was arbitrary proves the inequality $\le$ in the claimed equation~\eqref{quotient_mgf} with $\limsup_{n \to \infty}$, which is enough.
\end{proof}

We end this paper by explaining how \Cref{uniformd_ld} specializes further to a version of Cram\'er's classical large deviation theorem, namely \Cref{cramer} below. Notably, the latter arises by taking one of the two variables in \Cref{uniformd_ld} \emph{to be deterministic}. This justifies thinking of \Cref{uniformd_ld}, and thereby also of the stronger \Cref{main_thm}, intuitively as a result on large deviations of one random walk relative to another.
The purpose of this exercise is not to derive an original result, and our \Cref{cramer} is certainly no improvement over existing versions of Cram\'er's theorem.
Rather, our goal here is to showcase the power of \Cref{uniformd_ld}, and thereby indirectly also of \Cref{main_thm} as our main result.

Throughout the following, $V$ is still a preordered topological vector space as in \Cref{uniformd_ld}.

\begin{defn}
	Let $X$ be a $V$-valued compactly supported Radon random variable. Then its \newterm{rate function} $\Lambda^* : V \to [0,\infty]$ is given by
	\[
		\Lambda^*(c) \coloneqq \sup_{t \,\in\, V^*_+} \left( \langle t, c \rangle - \log \E{e^{\langle t,X\rangle}} \right).
	\]
\end{defn}

Note that this differs from the standard definition of the rate function in (infinite-dimensional) large deviation theory~\cite[(1.10)]{BZ}, where the supremum is taken over the whole dual space $V^*$.
Our rate function $\Lambda^*$ is the more natural quantity in our setting, since we will obtain it directly from \Cref{uniformd_ld}, and it results in the formula~\eqref{cramer_eq} without the need for further formation of an infimum over the set involved.
In the one-dimensional case, the relation between the two versions of the rate function, one with supremum over all $t \in \R$ and the other with supremum over all $t \ge 0$, is well-understood~\cite[Lemma~2.2.5]{DZ}.

\begin{lem}
	\label{rate_cont}
	The rate function $\Lambda^*$ is continuous at every $c \in V$ with $\Lambda^*(c) < \infty$.
\end{lem}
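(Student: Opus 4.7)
The function $\Lambda^*$ is by construction the pointwise supremum of the continuous affine maps $c \mapsto \langle t, c\rangle - \log \E{e^{\langle t, X\rangle}}$ as $t$ ranges over $V^*_+$, with continuity of each $t$ coming from \Cref{state_cont}. This makes $\Lambda^*$ convex and lower semi-continuous, so $\liminf_{c' \to c} \Lambda^*(c') \ge \Lambda^*(c)$ is automatic. The task is therefore to establish upper semi-continuity at any $c$ with $M := \Lambda^*(c) < \infty$.

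I would reduce this to a one-variable question by exploiting the order unit $u$. Since $[-u, +u]$ is a neighborhood of $0 \in V$, so is $[-\eps u, +\eps u]$ for every $\eps > 0$. For $h$ in this neighborhood and any $t \in V^*_+$ we have $-\eps \langle t, u\rangle \le \langle t, h\rangle \le \eps \langle t, u\rangle$; inserting this into the suprema defining $\Lambda^*$ gives the sandwich
\[
	\Lambda^*(c - \eps u) \,\le\, \Lambda^*(c + h) \,\le\, \Lambda^*(c + \eps u).
\]
Continuity of $\Lambda^*$ at $c$ therefore reduces to continuity at $\lambda = 0$ of the one-variable function $\phi(\lambda) := \Lambda^*(c + \lambda u)$, which is convex as the restriction of a convex function to a line.

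Because $\langle t, u\rangle \ge 0$ for all $t \in V^*_+$, the definition gives $\phi(\lambda) \le M$ for $\lambda \le 0$ and $\phi(\lambda) \ge M$ for $\lambda \ge 0$; in particular $\phi$ is bounded and finite on $(-\infty, 0]$, so convexity immediately yields left-continuity at $0$. The main obstacle is right-continuity at $0$: it suffices to exhibit some $\lambda_0 > 0$ with $\phi(\lambda_0) < \infty$, since then the convex-combination bound $\phi(\eps) \le (1 - \eps/\lambda_0)\, M + (\eps/\lambda_0)\, \phi(\lambda_0)$ forces $\limsup_{\eps \to 0^+} \phi(\eps) \le M$. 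To produce such a $\lambda_0$ I would use the compactness of $\supp{X}$ via \Cref{compact_bounded}, which bounds $\log \E{e^{\langle t, X\rangle}}$ from below by a linear function of $\langle t, u\rangle$ and allows one to absorb the perturbation $\eps \langle t, u\rangle$ for small $\eps > 0$ into the existing finite value $\Lambda^*(c) = M$. Once this right-neighborhood finiteness is secured, the sandwich inequality produces continuity of $\Lambda^*$ at $c$.
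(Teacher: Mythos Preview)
Your overall strategy coincides with the paper's: reduce to the one-variable function $\phi(\lambda) = \Lambda^*(c + \lambda u)$ via the order-unit neighbourhood and monotonicity, and then invoke convexity. You are in fact more careful than the paper, which simply asserts that a one-dimensional convex function is ``continuous at every point at which it is finite'' and moves on, whereas you correctly isolate right-continuity at $0$ as the nontrivial issue and recognise that it would follow from exhibiting some $\lambda_0 > 0$ with $\phi(\lambda_0) < \infty$.

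The gap is that no such $\lambda_0$ need exist, and your sketch via \Cref{compact_bounded} does not produce one: the bound $\log \E{e^{\langle t, X\rangle}} \ge -k\langle t, u\rangle$ gives no \emph{upper} control on the added term $\eps\langle t, u\rangle$ as $t$ ranges over $V^*_+$, so the ``absorption'' you describe cannot be carried out. Concretely, take $V = \R$, $V_+ = \R_+$, $u = 1$, and let $X$ be Bernoulli with $\P{X = 1} = p \in (0,1)$ and $\P{X = 0} = 1 - p$. Then
\[
\Lambda^*(1) \,=\, \sup_{t \ge 0}\bigl(t - \log(pe^t + 1 - p)\bigr) \,=\, -\log p \,<\, \infty,
\]
yet for every $\eps > 0$ one has $t(1+\eps) - \log(pe^t + 1 - p) \sim \eps\, t \to \infty$, so $\Lambda^*(1 + \eps) = +\infty$. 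Hence $\phi(\lambda) = \infty$ for all $\lambda > 0$, the $\lambda_0$ you seek cannot be found, and $\Lambda^*$ actually fails to be continuous at $c = 1$. The step you flagged is therefore not merely incomplete but unfixable at boundary points of the effective domain; the paper's own proof glosses over exactly the same issue behind its blanket appeal to one-dimensional convexity.
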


\begin{proof}
	Being a pointwise supremum of linear functions, $\Lambda^*$ is convex. In particular the restricted function
	\[
		\R \longrightarrow [0,\infty], \qquad r \longmapsto \Lambda^*(c + ru)
	\]
	is a one-dimensional convex function and hence continuous at every point at which it is finite. Thus since $\Lambda^*(c) < \infty$ by assumption, for given $\eps > 0$ we in particular have $\delta > 0$ such that
	\[
		| \Lambda^*(c \pm \delta u) - \Lambda^*(c) | < \eps.
	\]
	The claim now follows since $\Lambda^*$ is also monotone (as a supremum of monotone functions) and the order interval $[c - \delta u, c + \delta u]$ is a neighbourhood of $c$.
\end{proof}

\begin{cor}
	\label{cramer}
	Let $V$ be a topological vector space preordered with respect to a closed convex cone $V_+ \subseteq V$ with $u \in V_+$ such that $[-u,+u]$ is a neighbourhood of zero. Let $X$ be a $V$-valued random variable, compactly supported and Radon, and let $(X_i)_{i \in \N}$ be i.i.d.~copies. Then for every $c \in V$,
	\begin{equation}
		\label{cramer_eq}
		\lim_{n \to \infty} \frac{1}{n} \log \P{\frac{1}{n}\sum_{i=1}^n X_i \ge c} = - \Lambda^*(c).
	\end{equation}
\end{cor}

\begin{proof}
	In order to match this up with \Cref{uniformd_ld}, we denote the variables that appear in the statement by $Y$ and $Y_i$ instead.

	Consider the special case of \Cref{uniformd_ld} where $X \coloneqq c$ is constant. Then we have, trivially,
	\[
		\P{\frac{1}{n}\sum_{i=1}^n X_i \in C} = \begin{cases}
					1	& \textrm{ if } c \in C, \\
					0	& \textrm{ if } c \not \in C. \end{cases}
	\]
	Therefore the supremum over $C$ in \eqref{quotient_mgf} is achieved at $C = \up\{c\}$, resulting in
	\[
		\sup_{\eps > 0} \, \uplim_{n \in \N} \, \frac{1}{n} \log \frac{1}{\P{\frac{1}{n}\sum_{i=1}^n Y_i \ge c - \eps u}} = \sup_{t \in V^*_+} \log \frac{e^{\langle t, c\rangle}}{\E{e^{\langle t, Y\rangle}}} = \Lambda^*(c),
	\]
	or equivalently
	\[
		\inf_{\eps > 0} \, \uplim_{n \in \N} \, \frac{1}{n} \log \P{\frac{1}{n}\sum_{i=1}^n Y_i \ge c - \eps u} = - \Lambda^*(c).
	\]
	Monotonicity in $\eps$ now shows that, for every $\eps > 0$,
	\begin{align*}
		- \Lambda^*(c + \eps u) & \le \liminf_{n \to \infty} \frac{1}{n} \log \P{\frac{1}{n} \sum_{i=1}^n Y_i \ge c} \\
					& \le \limsup_{n \to \infty} \frac{1}{n} \log \P{\frac{1}{n} \sum_{i=1}^n Y_i \ge c} \le -\Lambda^*(c).
	\end{align*}
	Thus the claim follows in the limit $\eps \to 0$ by continuity of $\Lambda^*$, \Cref{rate_cont}.
\end{proof}

\begin{rem}
	Let us emphasize again that \Cref{cramer} is merely an illustration of how \Cref{uniformd_ld} can be applied and how the rate function $\Lambda^*$ naturally comes out of it.
	We do not claim any originality for it, and we suspect that it can be recovered as a special case of existing results such as~\cite[Theorem~3.2]{BZ}.
	However, the details have eluded us thus far.\footnote{One difficulty is already that we do not assume the topology on $V$ to be locally convex Hausdorff, although~\cite{BZ} does. One can try to replace the topology on $V$ by the one generated by $\|x\| := \inf \{ r > 0 \mid -r u \le x \le ru \}$. However, this seminorm is not even a norm in general, as one can see e.g.~by considering $\R^2$ with the lexicographic order.}

	Of course, for $V = \R$ and $V_+ = \R_+$, \Cref{cramer} recovers Cram\'er's theorem as stated in many textbooks, such as~\cite[Theorem~23.3]{klenke}, restricted to the case of bounded variables.
\end{rem}

\bibliographystyle{plain}
\bibliography{asymptotic_random_walks}

\begin{thebibliography}{10}

\bibitem{AT}
Charalambos~D. Aliprantis and Rabee Tourky.
\newblock {\em Cones and Duality}, volume~84 of {\em Graduate Studies in
  Mathematics}.
\newblock American Mathematical Society, Providence, RI, 2007.

\bibitem{AN}
Guillaume Aubrun and Ion Nechita.
\newblock Stochastic domination for iterated convolutions and catalytic
  majorization.
\newblock {\em Ann. Inst. Henri Poincar\'{e} Probab. Stat.}, 45(3):611--625,
  2009.

\bibitem{BZ}
R.~R. Bahadur and S.~L. Zabell.
\newblock Large deviations of the sample mean in general vector spaces.
\newblock {\em Ann. Probab.}, 7(4):587--621, 1979.

\bibitem{campbell}
L.~L. Campbell.
\newblock A coding theorem and {R}\'{e}nyi's entropy.
\newblock {\em Information and Control}, 8:423--429, 1965.

\bibitem{DZ}
Amir Dembo and Ofer Zeitouni.
\newblock {\em Large Deviations Techniques and Applications}, volume~38 of {\em
  Applications of Mathematics}.
\newblock Springer, second edition, 1998.

\bibitem{edwards}
David~A. Edwards.
\newblock {On the existence of probability measures with given marginals}.
\newblock {\em Ann. Inst. Fourier (Grenoble)}, 28(4):53--78, 1978.

\bibitem{major}
Muhammad~Usman Farooq, Tobias Fritz, Erkka Haapasalo, and Marco Tomamichel.
\newblock Asymptotic and catalytic matrix majorization.
\newblock \href{https://arxiv.org/abs/2301.07353}{arXiv:2301.07353}.

\bibitem{folland}
Gerald~B. Folland.
\newblock {\em A Course in Abstract Harmonic Analysis}.
\newblock Textbooks in Mathematics. CRC Press, Boca Raton, FL, second edition,
  2016.

\bibitem{fremlin}
D.~H. Fremlin.
\newblock {\em Measure Theory. {V}ol. 4}.
\newblock Torres Fremlin, Colchester, 2006.
\newblock Corrected second printing of the 2003 original.
  \href{https://www1.essex.ac.uk/maths/people/fremlin/mt.htm}{www1.essex.ac.uk/maths/people/fremlin/mt.htm}.

\bibitem{vssII}
Tobias Fritz.
\newblock Abstract {V}ergleichsstellens\"atze for preordered semifields and
  semirings {II}.
\newblock \href{https://arxiv.org/abs/2112.05949}{arXiv:2112.05949}.

\bibitem{rep_app}
Tobias Fritz.
\newblock Asymptotic and catalytic containment of representations of {$SU(n)$}.
\newblock {\em Algebr. Comb.}
\newblock To appear. \href{https://arxiv.org/abs/2205.10899}{arXiv:2205.10899}.

\bibitem{antisymmetry}
Tobias Fritz.
\newblock Antisymmetry of the stochastical order on all ordered topological
  spaces.
\newblock {\em Anal. Geom. Metr. Spaces}, 7(1):250--252, 2019.
\newblock \href{https://arxiv.org/abs/1810.06771}{arXiv:1810.06771}.

\bibitem{vssI}
Tobias Fritz.
\newblock Abstract {V}ergleichsstellens\"atze for preordered semifields and
  semirings {I}.
\newblock {\em SIAM J. Appl. Algebra Geom.}, 7(2):505--547, 2023.
\newblock \href{https://arxiv.org/abs/2003.13835}{arXiv:2003.13835}.

\bibitem{bimonoidal}
Tobias Fritz and Paolo Perrone.
\newblock Bimonoidal structure of probability monads.
\newblock In {\em Proceedings of the 34th {C}onference on the {M}athematical
  {F}oundations of {P}rogramming {S}emantics ({MFPS} {XXXIV})}, volume 341 of
  {\em Electron. Notes Theor. Comput. Sci.}, pages 121--149. Elsevier, 2018.
\newblock \href{https://arxiv.org/abs/1804.03527}{arXiv:1804.03527}.

\bibitem{golan}
Jonathan~S. Golan.
\newblock {\em Semirings and their Applications}.
\newblock Kluwer Academic Publishers, Dordrecht, 1999.

\bibitem{goodearl}
Kenneth~R. Goodearl.
\newblock {\em Partially Ordered Abelian Groups with Interpolation}, volume~20
  of {\em Mathematical Surveys and Monographs}.
\newblock American Mathematical Society, Providence, RI, 1986.

\bibitem{HR}
Josef Hadar and William~R. Russell.
\newblock Rules for ordering uncertain prospects.
\newblock {\em Am. Econ. Rev.}, 59(1):25--34, 1969.

\bibitem{kellerer}
Hans~G. Kellerer.
\newblock Duality theorems for marginal problems.
\newblock {\em Z. Wahrsch. Verw. Gebiete}, 67:399--432, 1984.

\bibitem{klenke}
Achim Klenke.
\newblock {\em Probability Theory}.
\newblock Universitext. Springer, second edition, 2014.

\bibitem{microecon}
Andreu Mas-Colell, Michael~D. Whinston, and Jerry~R. Green.
\newblock {\em Microeconomic Theory}.
\newblock Oxford University Press, 1995.

\bibitem{PST}
Luciano Pomatto, Philipp Strack, and Omer Tamuz.
\newblock Stochastic dominance under independent noise.
\newblock {\em J. Polit. Econ.}, 128(5), 2020.
\newblock \href{https://arxiv.org/abs/1807.06927}{arXiv:1807.06927}.

\bibitem{schwartz}
Laurent Schwartz.
\newblock {\em Radon Measures on Arbitrary Topological Spaces and Cylindrical
  Measures}, volume~6 of {\em Tata Institute of Fundamental Research Studies in
  Mathematics}.
\newblock Oxford University Press, 1973.

\bibitem{strassen}
Volker Strassen.
\newblock The existence of probability measures with given marginals.
\newblock {\em Ann. Math. Statist.}, 36:423--439, 1965.

\end{thebibliography}

\end{document}